\def\1a{{\mathbf 1}}
\def\2a{{\mathbf 2}}
\def\3a{{\mathbf 3}}
\def\4a{{\mathbf 4}}
\def\5a{{\mathbf 5}}
\def\6a{{\mathbf 6}}
\numberwithin{equation}{section}
\newtheorem{theorem}{Theorem}[section]
\newtheorem{corollary}[theorem]{Corollary}
\newtheorem{lemma}[theorem]{Lemma}
\newtheorem{example}[theorem]{Example}
\newcommand{\Sch}{\mathcal{S}}
\newcommand{\D}{\mathcal{D}}
\newcommand{\ee}{\mathsf{e}}
\newcommand{\dd}{\mathsf{d}}
\newcommand{\uu}{\mathsf{u}}
\newcommand{\rr}{\mathsf{r}}
\newcommand\GETOUT[1]{}
\newcommand{\s}{\mathfrak{S}}
\newcommand{\C}{\mathcal{C}}
\title[Combinatorial Gray codes for classes of permutations]{Combinatorial Gray codes for classes of pattern avoiding permutations}
\author{W.M.B. Dukes, M.F. Flanagan, T. Mansour and V. Vajnovszki}
\address{Science Institute, University of Iceland, Reykjav\'{i}k, Iceland.}
\email{dukes@raunvis.hi.is}
\address{Institute for Digital Communications, The University of Edinburgh, The King's Buildings, Mayfield Road, Edinburgh EH9 3JL, Scotland.}
\email{mark.flanagan@ieee.org}
\address{Department of Mathematics, University of Haifa, 31905 Haifa, Israel.}
\email{toufik@math.haifa.ac.il}
\address{LE2I UMR CNRS 5158, Universit\'e de Bourgogne B.P. 47 870, 21078 DIJON-Cedex France}
\email{vvajnov@u-bourgogne.fr}
\keywords{Gray codes, pattern avoiding permutations, generating algorithms}
\subjclass[2000]{Primary: 05A05, 94B25, Secondary: 05A15}
\begin{document}
\maketitle

\begin{abstract}
The past decade has seen a flurry of research into pattern avoiding
permutations but little of it is concerned with their exhaustive
generation. Many applications call for exhaustive generation of
permutations subject to various constraints or imposing a particular
generating order. In this paper we present generating algorithms and
combinatorial Gray codes for several families of pattern avoiding
permutations. Among the families under consideration are those
counted by Catalan, large Schr\"oder, Pell, even-index Fibonacci numbers
and the central binomial coefficients.
We thus provide
Gray codes for the set of all permutations of $\{1,\ldots , n\}$ avoiding
the pattern $\tau$
for all $\tau\in \s_3$ and
the
Gray codes we obtain have distances 4 or 5.
\end{abstract}

\section{Introduction}

A number of authors have been interested in Gray codes and generating
algorithms for permutations and their restrictions
(unrestricted \cite{Ehr_73}, with given {\it ups} and {\it downs} \cite{Kor_01,Roe_92},
involutions, and fixed-point free involutions \cite{Wal_01},
derangements \cite{baril_vaj}, permutations with a fixed number of cycles \cite{baril})
or their generalizations (multiset permutations \cite{Ko_92,Vaj_02_2}).
A recent paper \cite{juarna_vaj} presented Gray codes and generating algorithms
for the three classes of pattern avoiding permutations: $\s_n(123,132)$,
$\s_n(123,132,p(p-1)\ldots 1 (p+1))$,
and permutations in $\s_{n} (123,132)$ which have exactly ${\binom{n}{2}-k}$ inversions.
In \cite{bernini} a general technique
is presented for the generation of Gray codes for a large class of
combinatorial families; it is based on the ECO method and produces objects by their
encoding given by the generating tree (in some cases the obtained
encodings can be translated into the objects).
Motivated by these papers,
we investigate the related problem for several new classes of pattern
avoiding permutations.

More specifically, we give combinatorial Gray codes for classes
of pattern avoiding permutations which are counted by Catalan,
Schr\"oder, Pell, even-index Fibonacci numbers and the central binomial coefficients;
the Gray codes we obtain have distances 4 or 5.
Our work is different from similar work for combinatorial classes having the
same counting sequence, see for instance \cite{bernini,Vaj_02_1}.
%
%
Indeed, as Savage~\cite[\S 7]{savage} points out: `{\it{Since bijections are known between most members
of the Catalan family, a Gray code for one member of the family gives implicitly a listing scheme
for every other member of the family. However, the resulting list may not look like Gray codes,
since bijections need not preserve minimal changes between elements.}}'

Some direct constructions for $\s_n(231)$ exist but are, however, not Gray codes. For example,
B\'{o}na \cite[\S 8.1.2]{bonabook}
provides an algorithm for generating $\s_n(231)$.
This algorithm is such that the successor of the
permutation $\pi=(n,n-1,\ldots 2,1,    2n+1,2n,2n-1,\ldots , n+2,n+1)$ is
$\pi'=(1,2,\ldots, n-1, 2n+1,n,n+1,\ldots ,2n)$.
The number of places in which these two permutations differ is linear in $n$.

In Section 2 we present a combinatorial Gray code for $\s_n(231)$ with distance  4.
In Section 3 we present a Gray code for the Schr\"{o}der permutations, $\s_n(1243, 2143)$,
  with distance 5.
In Section 4 we present a general generating algorithm and Gray codes
for some classes of pattern avoiding permutations and discuss its limits.

The techniques we will use are:
in Section 2 and 3 reversing sublists  \cite{Rus_93};
in Section 3 combinatorial bijections \cite{juarna_vaj};
and in Section 4 generating trees \cite{bernini}.

Throughout this paper, it is convenient to use the following
notation. The number $c_n=\frac{1}{n+1}\binom{2n}{n}$ is the $n$-th
Catalan number. The large Schr\"oder numbers $r_{n}$ are defined by $r_0=1$ and
for all $n > 0$,
\begin{eqnarray}
r_{n} &=& r_{n-1} + \sum_{k=1}^{n} r_{k-1} r_{n-k}.
\end{eqnarray}
Let $A(1)=0$, $B(1)=0$ and for all $i>1$,
\begin{eqnarray}
A(i)&=& c_0+\ldots + c_{i-2}, \mbox{ and } \label{eq_1_2}
\end{eqnarray}
\begin{eqnarray}
B(i)&=& r_0+\ldots + r_{i-2}. \label{eq_1_3}
\end{eqnarray}
The parity of these numbers will be extremely important in proving the
Gray code properties of the generating algorithms for permutations we define later on in the paper.
However, the parity of $A(i)$ and $B(i)$ are not explicitly used in the algorithms.
Note that for all $0<k\leq 2^n$, $A(2^n+k)$ is odd iff $n$ is even.
One can easily show that $B(i)$ is odd iff $i=2$.
For two permutations $\sigma=\sigma_1\sigma_2\ldots\sigma_n$
and  $\tau=\tau_1\tau_2\ldots\tau_n$ in $\s_n$, the metric $d(\sigma,\tau)$ is
the number of places in which they differ; and
we denote by $\sigma\circ\tau$ (or more compactly as $\sigma\tau$) their product, that is,
the permutation $\pi$ in $\s_n$ with $\pi_i=\tau_{\sigma_i}$ for all $i$,
$1\leq i\leq n$.
In particular, when $\sigma$ is the transposition $(u,v)$,
then $(u,v)\circ\tau$ is the permutation $\pi$ with $\pi_i=\tau_i$ for all $i$,
except that $\pi_u=\tau_v$ and $\pi_v=\tau_u$.

\section{A Gray code for $\s_n(231)$}
Note that if $(\pi(1),\ldots,\pi(c_n))$ is an ordered list of elements
of $\s_n(231)$ such that $d(\pi(i),\pi(i+1))\leq 4$, then
the operations of reverse, complement and their composition provide
lists for $\s_n(132)$, $\s_n(213)$ and $\s_n(312)$, respectively, which preserve
the distance between two adjacent permutations.

\subsection{Generating 231-avoiding permutations}
First we introduce some general notation
concerning the list $\D_n$ that our algorithm will generate and then provide the necessary proofs
to show that $\D_n$ is the desired object.

For every $n\ge 0$, let $\D_{n}$ denote a list consisting of $c_{n}$
entries, each of which is some permutation of $\{1,\ldots , n\}$.
The $j$-th entry is denoted $\D_{n}\left(j\right)$. In order that we
may copy such a list, either in its natural or reversed order, we
define $\D_{n}^{i}$ to be $\D_{n}$ if $i$ is odd, and $\D_n$
reversed if $i$ is even, for every positive integer $i$. Thus
$\D_{n}^{i}(j)\;=\;\D_{n}^{i+1}(c_{n}+1-j)$ for all $1\leq j\leq
c_{n}$.

By $\D_{n}\left(j\right)+l$ we shall mean $\D_n(j)$ with every element incremented by the value $l$.
Concatenation of lists is defined in the usual way, concatenation of any permutation with the null
permutation yields the same permutation, i.e. $[\tau, \; \emptyset ] \,=\, [\emptyset, \; \tau ] \, = \, \tau$.

The list $\D_{n}$ is defined recursively as follows; $\D_{0}$ consists
of a single entry which contains the null permutation that we denote
as $\emptyset$. For any $n\ge1$,
\begin{eqnarray}\label{list_231}
\D_{n}&=&\bigoplus_{i=1}^{n}\bigoplus_{j=1}^{c_{i-1}}\bigoplus_{k=1}^{c_{n-i}}
        \left[
        \D_{i-1}^{n+i-1}(j) , n , \D_{n-i}^{j+A(i)+1}(k)+(i-1)\right] , \label{eq:Dndef}
\end{eqnarray}
where $A(i)$ is defined in Equation (\ref{eq_1_2}) and $\oplus$ denotes the concatenation operator, e.g.
$$\bigoplus_{i=1}^2 \bigoplus_{j=1}^{2} \left(f(i,j)\right)
\; = \; \left( f(1,1), \, f(1,2) ,\, f(2,1) ,\, f(2,2) \right).$$

\begin{lemma}\label{21}
The list $\D_{n}$ contains all $231$-avoiding
permutations exactly once.
\end{lemma}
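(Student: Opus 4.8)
The plan is to argue by induction on $n$, using the recursive structure of the well-known decomposition of $231$-avoiding permutations. Recall the classical fact that every $\pi\in\s_n(231)$ can be written uniquely by locating the position of the largest value: if $\pi_p=n$, then $\pi_1\cdots\pi_{p-1}$ is a permutation of $\{1,\ldots,p-1\}$ avoiding $231$ (it must use the smallest values, since any value exceeding something to its right together with $n$ would create a $231$ pattern), and $\pi_{p+1}\cdots\pi_n$ is, after subtracting $p-1$ from each entry, a $231$-avoiding permutation of $\{1,\ldots,n-p\}$. Conversely, any such pair of smaller $231$-avoiding permutations, glued around $n$ with the appropriate shift, yields an element of $\s_n(231)$. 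This is exactly the combinatorial content mirrored by formula~(\ref{eq:Dndef}), with $i=p$, so the set-theoretic identity is immediate; the whole burden is to check that $\D_n$ lists each such permutation \emph{exactly once}.

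First I would set up the induction: the base case $\D_0$ (and $\D_1$) is trivial. For the inductive step, assume that for every $m<n$ the list $\D_m$ — and hence every reversal $\D_m^i$, since reversing a list does not change the set of entries or their multiplicities — contains each element of $\s_m(231)$ exactly once. Then in~(\ref{eq:Dndef}), for fixed $i$ the inner double concatenation $\bigoplus_{j=1}^{c_{i-1}}\bigoplus_{k=1}^{c_{n-i}}[\D_{i-1}^{\bullet}(j),n,\D_{n-i}^{\bullet}(k)+(i-1)]$ runs $\D_{i-1}^{\bullet}(j)$ over all $c_{i-1}$ entries of $\D_{i-1}$ and, for each such $j$, runs $\D_{n-i}^{\bullet}(k)$ over all $c_{n-i}$ entries of $\D_{n-i}$. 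By the inductive hypothesis this produces, for each $i$, every permutation in $\s_n(231)$ whose largest entry sits in position $i$, each exactly once. Summing over $i=1,\ldots,n$ partitions $\s_n(231)$ according to the position of $n$, so $\D_n$ is a list of length $\sum_{i=1}^n c_{i-1}c_{n-i}=c_n$ containing each $231$-avoider exactly once.

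The one subtlety to handle carefully — and the only place where anything could go wrong — is the bookkeeping of the \emph{superscripts} on the sublists, i.e. the claim that whatever parities $n+i-1$ and $j+A(i)+1$ take, the underlying set of entries of $\D_{i-1}^{n+i-1}$ and $\D_{n-i}^{j+A(i)+1}$ is unchanged. Since $\D_m^i$ is defined to be $\D_m$ (possibly reversed) and reversal is a bijection of a list onto itself, the superscripts are irrelevant for \emph{this} lemma; they matter only for the Gray-code distance property proved later, which is precisely why the paper remarks that the parities of $A(i)$ and $B(i)$ "are not explicitly used in the algorithms." So I would state explicitly, as the key lemma-internal observation, that $\{\D_m^i(1),\ldots,\D_m^i(c_m)\}=\{\D_m(1),\ldots,\D_m(c_m)\}$ as multisets for every $i$, and then the counting argument above goes through verbatim. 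The main obstacle is thus not mathematical depth but notational discipline: making sure the triple concatenation is read in the right order and that "exactly once" is tracked through the nested loops rather than merely "at least once."
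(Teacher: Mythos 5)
Your proposal is correct and follows essentially the same route as the paper: decompose each $\pi\in\s_n(231)$ uniquely as $\tau\, n\, \sigma$ according to the position of $n$, apply strong induction to the sublists $\D_{i-1}$ and $\D_{n-i}$, and note that the reversal superscripts do not affect the underlying multiset of entries. The only difference is that you spell out the uniqueness of the decomposition and the irrelevance of the superscripts explicitly, which the paper leaves implicit.
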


\begin{proof}
Every permutation $\pi \in \s_{n}(231)$ may be decomposed as
$\pi=\tau n \sigma$, where $\tau \in \s_{i-1}(231)$ and $\sigma$ is
a 231-avoiding permutation on the set $\{i,\ldots ,n-1\}$ which is
order-isomorphic to a $\sigma ' \in \s_{n-i}$. In $\D_n$, $n$
assumes the positions $i=1,2,\ldots , n$. For each position $i$ of
$n$, $\tau$ runs through $\D_{i-1}$ alternately forwards and
backwards, forwards the last time. For each $\tau$, $\sigma$ runs
through $\D_{i-1}+(i-1)$ alternately forwards and backwards,
backwards the first time (see Table 1). The result follows by strong
induction on $n$.
\end{proof}

\begin{lemma}\label{22}
For all $n\geq 2$,
$$\D_{n}(1)\;=\; n 1 2 3 \cdots (n-1) \mbox{ and }
\D_{n}(c_{n})\;=\; 1 2 3 \cdots n.$$
\end{lemma}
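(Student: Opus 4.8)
The plan is to prove both identities by induction on $n$, reading them directly off the recursive definition \eqref{eq:Dndef} of $\D_n$. The base case $n=2$ can be checked by hand: $\D_2$ is built from $\D_0$, $\D_1$ and the values $A(1)=0$, $A(2)=c_0=1$, and one verifies that the first entry is $21$ and the last is $12$.

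For the first identity, I would isolate the very first entry in the triple concatenation \eqref{eq:Dndef}. The outermost index $i$ starts at $i=1$, and then $j$ ranges over $1\le j\le c_0=1$ and $k$ over $1\le k\le c_{n-1}$; so the first block is $[\D_0^{n}(1),\,n,\,\D_{n-1}^{1+A(1)+1}(k)+0]$ for $k=1,\dots,c_{n-1}$. Since $\D_0^{n}(1)=\emptyset$ and $A(1)=0$, this block is just $n$ prepended to $\D_{n-1}^{2}$, i.e.\ the list $\D_{n-1}$ read in reverse, with $n$ in front. Its first entry is therefore $n$ followed by $\D_{n-1}^{2}(1)=\D_{n-1}(c_{n-1})$, which by the induction hypothesis equals $1\,2\,\cdots\,(n-1)$. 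Hence $\D_n(1)=n\,1\,2\,\cdots\,(n-1)$, as claimed.

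For the second identity I would symmetrically isolate the last block: the outermost index ends at $i=n$, then $j$ ranges over $1\le j\le c_{n-1}$ and $k$ over $1\le k\le c_0=1$, so the final block is $[\D_{n-1}^{2n-1}(j),\,n,\,\D_0^{\,j+A(n)+1}(1)+(n-1)]$ for $j=1,\dots,c_{n-1}$. Here $\D_0$ has a single (null) entry, so the block is simply $\D_{n-1}^{2n-1}$ with $n$ appended, that is (since $2n-1$ is odd) $\D_{n-1}$ in natural order followed by $n$. Its last entry is $\D_{n-1}(c_{n-1})$ followed by $n$, which by induction is $1\,2\,\cdots\,(n-1)\,n$. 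Therefore $\D_n(c_n)=1\,2\,\cdots\,n$.

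The only mildly delicate points are bookkeeping: confirming that the first (resp.\ last) block of the nested concatenation really does contribute the first (resp.\ last) entry of $\D_n$ — which is immediate from the left-to-right, $i$-then-$j$-then-$k$ ordering illustrated by the $\oplus$ example in the text — and tracking the parity superscripts so that $\D_{n-1}^{2}$ is the reversed list while $\D_{n-1}^{2n-1}$ is the forward list. Neither of these is a real obstacle; the main thing to be careful about is that the increment $+(i-1)$ vanishes when $i=1$ (so the first block genuinely lies in $\s_n(231)$ with $n$ leading the smallest values) and that the $\D_0$ factor collapses under the concatenation-with-null convention $[\tau,\emptyset]=\tau$ when $i=n$. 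With those observations in place the induction closes.
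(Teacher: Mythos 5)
Your proof is correct and follows essentially the same route as the paper: induction on $n$, identifying $\D_n(1)$ with the term $(i,j,k)=(1,1,1)$ so that it equals $n$ followed by $\D_{n-1}^{2}(1)=\D_{n-1}(c_{n-1})$, and $\D_n(c_n)$ with the term $(i,j,k)=(n,c_{n-1},1)$ so that it equals $\D_{n-1}^{2n-1}(c_{n-1})$ followed by $n$. Your extra care with the parity superscripts and the collapsing $\D_0$ factors only makes explicit what the paper leaves implicit.
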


\begin{proof}
The proof proceeds by induction on $n$.
We have $\D_{0}=\emptyset$.
Assume the result holds for each $i=0,1,2,\ldots n-1$. Then by Equation~(\ref{eq:Dndef}),
$\D_{n}(1)$ corresponds to the expression with $i=1,j=1$ and $k=1$;
\begin{eqnarray*}
\D_{n}(1)
    & = & n \; \D_{n-1}^{1+A(1)+1}(1) \; = \; n\;\D_{n-1}^2(1) \; = \; n\;\D_{n-1}(c_{n-1}) \; = \;
     n  1 2  3 \cdots (n-1).
\end{eqnarray*}
The last entry $\D_{n}(c_{n})$ corresponds to the expression in
Equation~(\ref{eq:Dndef}) with $i=n,j=c_{i-1}$ and $k=c_{n-i}$;
\begin{eqnarray*}
\D_{n}(c_{n})   & = & \D_{n-1}^{2n-1}(c_{n-1}) \;  n\;
        \; =\; 1 2 3 \cdots n.
\end{eqnarray*}
\end{proof}

\begin{theorem}\label{23}
For each $q\in\left\{ 1,2,\ldots c_{n}-1\right\} $,
$\D_{n}\left(q\right)$ differs from its successor
$\D_{n}(q+1)$ by a rotation of two, three or four elements.
\end{theorem}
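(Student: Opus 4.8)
The plan is to induct on $n$, using the recursive structure of $\D_n$ given in Equation~(\ref{eq:Dndef}). The list $\D_n$ is a concatenation of blocks, one for each triple $(i,j,k)$; within a single block the permutation $n$ sits in a fixed position $i$, the prefix $\D_{i-1}^{n+i-1}(j)$ is fixed, and only the suffix $\D_{n-i}^{j+A(i)+1}(k)+(i-1)$ varies with $k$. Two consecutive entries of $\D_n$ fall into one of three cases: (a) both lie in the same $(i,j)$-block and $k$ increases by one; (b) they lie in consecutive $j$-blocks with the same $i$ (so $k$ wraps from $c_{n-i}$ back to $1$ and $j$ increases by one); (c) they lie in the last $j$-block of one $i$ and the first $j$-block of $i+1$ (so both $j$ and $k$ wrap and $i$ increases by one). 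I would handle these three cases in turn.

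In case (a), the position and relative order of the suffix is governed by the superscript $j+A(i)+1$, whose parity is constant across the block, so consecutive entries differ exactly as consecutive entries of $\D_{n-i}$ differ (after the shift by $i-1$), which by the induction hypothesis is a rotation of two, three or four elements — and these elements all lie strictly to the right of position $i$, so nothing else moves. In case (b), the suffix copy $\D_{n-i}$ has just been traversed to one end and must be ``turned around'': because the superscript parity flips when $j\mapsto j+1$, the suffix is now read from the \emph{same} end it just finished at, so the suffix contributes no change; meanwhile the prefix $\D_{i-1}^{n+i-1}$ advances by one entry, which by induction is again a rotation of at most four elements, all lying strictly to the left of position $i$. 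This is exactly the ``reversing sublists'' trick of Ruskey that the introduction advertises, and it is the reason the alternating superscripts were built into the definition. Case (c) is the delicate one: here $n$ itself moves from position $i$ to position $i+1$, the prefix jumps from $\D_{i-1}(c_{i-1})=12\cdots(i-1)$ (the last entry, by Lemma~\ref{22}, since the superscript $n+i-1$ is chosen so the $(i-1)$-list ends ``forwards'') to $\D_i(1)=i\,1\,2\cdots(i-1)$ (the first entry), and the suffix jumps from one boundary value of $\D_{n-i}+(i-1)$ to a boundary value of $\D_{n-i-1}+i$. I would use Lemma~\ref{22} to identify all four of these boundary permutations explicitly: the outgoing entry is $1\,2\cdots(i-1)\;n\;(\text{a known permutation of }\{i,\dots,n-1\})$ and the incoming entry is $i\,1\,2\cdots(i-1)\;(i+1)\;(\text{a known permutation of }\{i+2,\dots,n\})$, and I expect that after writing both out, the two permutations are seen to differ precisely by the $4$-cycle that reinserts $n$ and $i$ into their new slots while the block $1\,2\cdots(i-1)$ shifts by one — a rotation of four elements.

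The main obstacle is case (c), and within it the bookkeeping of which boundary entry of each sublist one actually lands on: one must track carefully how the superscripts $n+i-1$, $j+A(i)+1$, $2i-1$, etc. evaluate at the block boundaries, and invoke the parity facts about $A(i)$ recorded after Equation~(\ref{eq_1_3}) to be sure the ``forwards/backwards'' orientation is what Lemma~\ref{22} needs. Once the four boundary permutations are pinned down, verifying that they differ by a single rotation of four elements should be a short direct computation, so the bulk of the work is the careful case analysis and the orientation bookkeeping rather than any deep idea.
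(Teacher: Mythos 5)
Your overall strategy is the same as the paper's: induct on $n$, split the consecutive pairs of $\D_n$ into the three transition types (only $k$ increments; $j$ increments and $k$ wraps; $i$ increments and both wrap), dispose of the first by applying the induction hypothesis to the suffix, dispose of the second by noting that the parity flip in the superscript makes the wrapped suffix copy agree at the seam so that only the prefix moves, and reduce the third to an explicit computation of boundary permutations via Lemma~\ref{22}. Your cases (a) and (b) match the paper's cases (i) and (ii) essentially verbatim.

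The gap is in your case (c), and it is not merely unfinished bookkeeping: two of the concrete claims you make there are false. First, you assert that the outgoing prefix is $\D_{i-1}(c_{i-1})=12\cdots(i-1)$ ``since the superscript $n+i-1$ is chosen so the $(i-1)$-list ends forwards.'' But the parity of $n+i-1$ changes with $i$, so for half the values of $i$ the prefix sublist is traversed backwards and its final entry is the \emph{first} entry of $\D_{i-1}$, namely $(i-1)\,1\,2\cdots(i-2)$, not $1\,2\cdots(i-1)$; the same issue affects which end of the suffix lists you land on, governed by the parity of $c_{i-1}+A(i)+1=A(i+1)+1$. This is exactly why the paper's case (iii) splits into four sub-cases according to the parities of $n+t$ and $A(t+1)+1$, and why the parity facts about $A$ recorded after Equation~(\ref{eq_1_3}) matter. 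Second, you predict that the boundary transition is always a rotation of four elements; in fact it is a transposition in one sub-case, a $3$-rotation in two others, and a $4$-rotation only in the fourth (the paper even remarks that no $4$-rotation occurs before $n=7$). A proof built on the assumption that both lists end ``forwards'' would simply compute the wrong pair of permutations in three of the four parity configurations. You would also need to treat the degenerate values $t=1,2,n-2,n-1$ separately, since the generic explicit forms of the boundary permutations break down there; the paper handles these at the end of each sub-case.
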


\begin{proof}
The proof proceeds by induction.
The result holds trivially for $n=1$ since $\D_1$ consists of a single permutation.
Assume the
result holds for $\D_{i}$ for each $i=1,2,\ldots n-1$. From Equation~(\ref{eq:Dndef}),
there are 3 cases:

\begin{enumerate}
\item[(i)] The current permutation corresponds to $\left(i;j;k=t\right)$ and the
next permutation corresponds to $\left(i;j;k=t+1\right)$, where $t\in\left\{ 1,2,\ldots c_{n-i}-1\right\} $.
Therefore \begin{eqnarray*}
\D_{n}(q) & = &
\D_{i-1}^{n+i-1}(j) \; n \; \D_{n-i}^{j+A(i)+1}(t)+(i-1)\\
\D_{n}(q+1) & = &
\D_{i-1}^{n+i-1}(j) \; n \; \D_{n-i}^{j+A(i)+1}(t+1)+(i-1),
\end{eqnarray*}
and by the induction hypothesis, $$d(\D_n(q),\D_n(q+1)) =
d(\D_{n-i}(t),\D_{n-i}(t+1)) \leq 4.$$
\item[(ii)] The current permutation corresponds to $\left(i,j=t,k=c_{n-i}\right)$
and the next permutation corresponds to $\left(i;j=t+1;k=1\right)$,
where $t\in\left\{ 1,2,\ldots c_{i-1}-1\right\} $. Therefore
\begin{eqnarray*}
\D_{n}\left(q\right) & = &  \D_{i-1}^{n+i-1}(t) \; n \; \D_{n-i}^{t+A(i)+1}(c_{n-i})+(i-1) \\
\D_{n}\left(q+1\right) & = & \D_{i-1}^{n+i-1}(t+1) \; n \; \D_{n-i}^{t+A(i)+2}(1)+(i-1).
\end{eqnarray*}
Since $\D_{n-i}^{t+A(i)+1}(c_{n-i})=\D_{n-i}^{t+A(i)+2}(1)$,
the induction hypothesis gives
$$d(\D_n(q),\D_n(q+1)) = d(\D_{i-1}(t),\D_{i-1}(t+1)) \leq 4.$$

\item[(iii)] The current permutation corresponds to $\left(i=t;j=c_{i-1};k=c_{n-i}\right)$
and the next permutation corresponds to
$\left(i=t+1;j=1;k=1\right)$, where $t\in\{ 1,\ldots$ $n-1\}$.
Therefore
\begin{eqnarray*}
\D_{n}\left(q\right) & = & \D_{t-1}^{n+t-1}(c_{t-1}) \; n \; \D_{n-t}^{c_{t-1}+A(t)+1}(c_{n-t})+(t-1)\\
\D_{n}\left(q+1\right) & = & \D_{t}^{n+t}(1) \; n \; \D_{n-t-1}^{1+A(t+1)+1}(1)+t.
\end{eqnarray*}
This divides into four cases, where in each case we use Lemma~\ref{22}
and the fact that $A(t+1)=A(t)+c_{t-1}$:\\
(a)
If $n+t$ is odd and  $c_{t-1}+A(t)+1=A(t+1)+1$ is odd, then
\begin{eqnarray*}
\D_{n}(q)&=& 1 \, 2 \, 3 \, \ldots \, (t-1) \, n \, t \, (t+1) \, \ldots \, (n-1) \\
\D_{n}(q+1)&=& 1 \, 2 \, 3 \, \ldots \, (t-1) \, t \, n \, (t+1) \, \ldots \, (n-1).
\end{eqnarray*}
Here $\D_{n}(q+1)$ is obtained from $\D_{n}(q)$
via a single transposition of elements at positions $(t,t+1)$.\\
(b)
If $n+t$ is odd and $c_{t-1}+A(t)+1$ is even, then
\begin{eqnarray*}
\D_{n}(q)&=&   1 \, 2 \, \ldots \, (t-1) \, n \, (n-1) \, t \, (t+1) \, \ldots \, (n-2)\\
\D_{n}(q+1)&=& 1 \, 2 \, \ldots \, (t-1) \, t \, n \, (n-1) \, (t+1) \, \ldots \, (n-2),
\end{eqnarray*}
for all $t\leq n-3$.
Here $\D_{n}(q+1)$ is obtained from $\D_{n}(q)$
via a rotation of the 3 elements at positions $(t,t+1,t+2)$.
If $t=n-2$ then
\begin{eqnarray*}
\D_{n}(q)&=&   1 \, 2 \, \ldots \, (n-3) \, n \, (n-1) \, (n-2) \, \mbox{ and }\\
\D_{n}(q+1)&=& 1 \, 2 \, \ldots \, (n-3) \, (n-2) \, n \, (n-1).
\end{eqnarray*}
These permutations differ by a rotation of the 3 elements at positions $(n-2,n-1,n)$. If $t=n-1$ then
\begin{eqnarray*}
\D_{n}(q)&=&   (n-2)\, 1 \, 2 \, \ldots \, (n-3) \, n    \, (n-1) \, \mbox{ and }\\
\D_{n}(q+1)&=& (n-1)\, 1 \, 2 \, \ldots \, (n-3) \, (n-2) \, n .
\end{eqnarray*}
These permutations differ by a rotation of the 3 elements at positions $(1,n-1,n)$.\\
(c) If $n+t$ is even and $c_{t-1}+A(t)+1$ is odd, then
\begin{eqnarray*}
\D_{n}(q)&=& (t-1) \, 1 \, 2 \, \ldots \, (t-2) \, n \, t \, (t+1) \, \ldots \, (n-1)\;\mbox{ and }\\
\D_{n}(q+1)&=& t \, 1 \, 2 \, \ldots \, (t-2) \, (t-1) \, n \, (t+1) \,\ldots \, (n-1)
\end{eqnarray*}
for all $t\geq 3$.
Here $\D_{n}\left(q+1\right)$ is obtained from $\D_{n}(q)$
via a rotation of the 3 elements at positions $(1,t,t+1)$.\\
The degenerate cases $t=1,2$ are dealt with in the same manner as those at the end of part (b).\\
(d)
If $n+t$ is even and $c_{t-1}+A(t)+1$ is even, then
\begin{eqnarray*}
\D_{n}(q)&=& (t-1) \, 1 \, 2 \, \ldots \, (t-2) \, n \, (n-1) \, t \, (t+1) \, \ldots \, (n-2)\\
\D_{n}(q+1)&=& t \, 1 \, 2 \, \ldots \, (t-2) \, (t-1) \, n \, (n-1) \, (t+1) \, \ldots \, (n-2),
\end{eqnarray*}
for all $3\leq t \leq n-3$.
Here $\D_{n}\left(q+1\right)$ is obtained from $\D_{n}(q)$
via a rotation of the 4 elements at positions $(1,t,t+1,t+2)$.
The degenerate cases $t=1,2,n-2,n-1$ are dealt with in the same manner as those at the end of part (b).
\end{enumerate}
\end{proof}

In Table \ref{ttable_1} is given the list $\D_{6}$
obtained by relation (\ref{list_231}).
The alert reader will note that there is no rotation of 4 elements in Table 1.
Such a rotation is first observed when $n=7$ and $t=3$
(the permutation 2176345 becomes 3127645).

\begin{table}
\begin{center}
\caption{\label{ttable_1}
The Gray code $\D_{6}$ for the set $\s_6(231)$ given by
relation (\ref{list_231}) and produced by Algorithm 1.
Permutations are listed column-wise and
changed entries are in bold. }

{\small
 $\begin{array}{|c|c|c|c|c|c|}
 \hline
\begin{array}[t]{c}
6     1     2     3     4     5\\
6     \2a    \1a    3     4     5\\
6     \1a    \3a    \2a    4     5\\
6     \3a    \2a    \1a    4     5\\
6     3     \1a    \2a    4     5\\
6     \2a    1     \4a    \3a    5\\
6     \1a    \2a    4     3     5\\
6     1     \4a    \2a    3     5\\
6     1     4     \3a    \2a    5\\
6     \4a    \3a    \1a    2     5\\
6     4     3     \2a    \1a    5\\
6     4     \1a    \3a    \2a    5\\
6     4     \2a     \1a    \3a    5\\
6     4     \1a     \2a    3     5\\
6     \3a    1     2     \5a    \4a\\
6     3     \2a     \1a    5     4\\
6     \1a    \3a     \2a    5     4\\
6     \2a    \1a     \3a    5     4\\
6     \1a    \2a     3     5     4\\
6     1     2     \5a    \3a    4\\
6     1     2     5     \4a    \3a\\
6     \2a    \1a     5     4     3\\
\end{array} &
\begin{array}[t]{c}
6     2     1     5     \3a     \4a\\
6     \1a    \5a    \2a    3     4\\
6     1     5     \3a    \2a    4\\
6     1     5     \2a    \4a    \3a\\
6     1     5     \4a    \3a    \2a\\
6     1     5     4     \2a    \3a\\
6     \5a    \4a    \1a    2     3\\
6     5     4     \2a    \1a    3\\
6     5     4     \1a    \3a    \2a\\
6     5     4     \3a    \2a    \1a\\
6     5     4     3     \1a    \2a\\
6     5     \1a    \4a    \3a    2\\
6     5     1     4     \2a    \3a\\
6     5     1     \2a    \4a    3\\
6     5     \2a    \1a    4     3\\
6     5     \3a    1     \2a    \4a\\
6     5     3     \2a    \1a    4\\
6     5     \1a    \3a    \2a    4\\
6     5     \2a    \1a    \3a    4\\
6     5     \1a    \2a    3     4\\
\1a    \6a    \5a    2     3     4\\
1     6     5     \3a    \2a    4\\
\end{array} &
\begin{array}[t]{c}
1     6     5     \2a     \4a     \3a\\
1     6     5     \4a    \3a    \2a\\
1     6     5     4     \2a    \3a\\
1     6     \2a    \5a    \4a    3\\
1     6     2     5     \3a    \4a\\
1     6     2     \3a    \5a    4\\
1     6     \3a    \2a    5     4\\
1     6     \4a    2     \3a    \5a\\
1     6     4     \3a    \2a    5\\
1     6     \2a    \4a    \3a    5\\
1     6     \3a    \2a    \4a    5\\
1     6     \2a    \3a    4     5\\
1     \2a    \6a    3     4     5\\
1     2     6     \4a    \3a    5\\
1     2     6     \3a    \5a    \4a\\
1     2     6     \5a    \4a    \3a\\
1     2     6     5     \3a    \4a\\
\2a    \1a    6     5     3     4\\
2     1     6     5     \4a    \3a\\
2     1     6     \3a    \5a    \4a\\
2     1     6     \4a    \3a    5\\
2     1     6     \3a    \4a    5\\
\end{array} &
\begin{array}[t]{c}
\3a     1     \2a    \6a     4     5\\
3     1     2     6     \5a    \4a\\
3     \2a    \1a    6     5     4\\
3     2     1     6     \4a    \5a\\
\1a    \3a    \2a    6     4     5\\
1     3     2     6     \5a    \4a\\
\2a    \1a    \3a    6     5     4\\
2     1     3     6     \4a    \5a\\
\1a    \2a    3     6     4     5\\
1     2     3     6     \5a    \4a\\
1     2     3     \4a    \6a    \5a\\
\2a    \1a    3     4     6     5\\
\1a    \3a    \2a    4     6     5\\
\3a    \2a    \1a    4     6     5\\
3     \1a    \2a    4     6     5\\
\2a    1     \4a    \3a    6     5\\
\1a    \2a    4     3     6     5\\
1     \4a    \2a    3     6     5\\
1     4     \3a    \2a    6     5\\
\4a    \3a    \1a    2     6     5\\
4     3     \2a    \1a    6     5\\
4     \1a    \3a    \2a    6     5\\
\end{array} &
\begin{array}[t]{c}
4     \2a     \1a     \3a     6     5\\
4     \1a    \2a    3     6     5\\
\5a     1    2     3     \4a    \6a\\
5     \2a    \1a    3     4     6\\
5     \1a    \3a    \2a    4     6\\
5     \3a    \2a    \1a    4     6\\
5     3     \1a    \2a    4     6\\
5     \2a    1     \4a    \3a    6\\
5     \1a    \2a    4     3     6\\
5     1     \4a    \2a    3     6\\
5     1     4     \3a    \2a    6\\
5     \4a    \3a    \1a    2     6\\
5     4     3     \2a    \1a    6\\
5     4     \1a    \3a    \2a    6\\
5     4     \2a    \1a    \3a    6\\
5     4     \1a    \2a    3     6\\
\1a    \5a    \4a    2     3     6\\
1     5     4     \3a    \2a    6\\
1     5     \2a    \4a    \3a    6\\
1     5     \3a    \2a    \4a    6\\
1     5     \2a    \3a    4     6\\
\2a    \1a    \5a    3     4     6\\
\end{array} &
\begin{array}[t]{c}
2     1     5     \4a     \3a     6\\
\1a    \2a    5     4     3     6\\
1     2     5     \3a    \4a     6\\
1     2     \3a    \5a    4     6\\
\2a    \1a    3     5     4     6\\
\1a    \3a    \2a    5     4     6\\
\3a    \2a    \1a    5     4     6\\
3     \1a    \2a    5     4     6\\
\4a    1     2     \3a    \5a    6\\
4     \2a    \1a    3     5     6\\
4     \1a    \3a    \2a    5     6\\
4     \3a    \2a    \1a    5     6\\
4     3     \1a    \2a    5     6\\
\1a    \4a    \3a    2     5     6\\
1     4     \2a    \3a    5     6\\
1     \2a    \4a    3     5     6\\
\2a    \1a    4     3     5     6\\
\3a    1     \2a    \4a    5     6\\
3     \2a    \1a    4     5     6\\
\1a    \3a    \2a    4     5     6\\
\2a    \1a    \3a    4     5     6\\
\1a    \2a    3     4     5     6\\
\end{array}\\
\hline
\end{array}$
}
\end{center}
\end{table}

\begin{figure}
\begin{tabular}{l}
  \hline
  {\bf Algorithm 1} Pseudocode for generating $\s_N(231)$ using Equation (\ref{list_231}).
  The list\\
  $\D_{n}$ is computed for each $1\leq n \leq N$. Here $\D_{n}^{R}$ denotes the reversal of list $\D_{n}$.\\\hline
    $\quad$\textbf{set} $D_{0}$ to a $1\times0$ matrix\\
    $\quad$\textbf{set} $D_{1}:=[1]$\\
    $\quad$\textbf{for} $n:=2$ {\bf to} $N$ {\bf do}\\
    $\qquad\tau\textrm{state}:=n\,\,\,\,(\textrm{mod }2)\qquad$ \{1 means forwards and 0 means backwards\}\\
    $\qquad\sigma\textrm{state}:=0$ \\
    $\qquad${\bf for} $i:=1$ {\bf to} $n$ {\bf do}\\
    $\qquad\quad${\bf for} $l:=1$ {\bf to} $i-1$ {\bf do}\\
    $\qquad\qquad${\bf if} $\tau\textrm{state}=0$ {\bf then}\\
    $\qquad\qquad\quad\tau=:D_{i-1}^{R}\left(l\right)$\\
    $\qquad\qquad${\bf else}\\
    $\qquad\qquad\quad\tau:=D_{i-1}\left(l\right)$\\
    $\qquad\qquad${\bf end if}\\
    $\qquad\qquad${\bf for} $r:=1$ {\bf to} $c_{n-i}$ {\bf do}\\
    $\qquad\qquad\quad${\bf if} $\sigma$\textrm{state}=0 {\bf then}\\
    $\qquad\qquad\qquad\sigma:=D_{n-i}^{R}\left(r\right)+\left(i-1\right)$\\
    $\qquad\qquad\quad${\bf else}\\
    $\qquad\qquad\qquad\sigma:=D_{n-i}\left(r\right)+\left(i-1\right)$\\
    $\qquad\qquad\quad${\bf end if}\\
    $\qquad\qquad\quad$new\_row$:=\left[\tau , n , \sigma\right]$\\
    $\qquad\qquad\quad$Append new\_row to $D_{n}$\\
    $\qquad\qquad${\bf end for}\\
    $\qquad\qquad\sigma\textrm{state}:=\sigma\textrm{state}+1\,\,\,\,(\textrm{mod }2)$\\
    $\qquad\quad${\bf end for}\\
    $\qquad\quad\tau\textrm{state}:=\tau\textrm{state}+1\,\,\,\,(\textrm{mod }2)$\\
    $\qquad${\bf end for}\\
    $\quad${\bf end for}\\\hline
\end{tabular}
\end{figure}

\section{A Gray code for Schr\"oder permutations}
The permutations $\s_n(1243,2143)$ are called Schr\"oder permutations and are just one of the classes of permutations
enumerated by the Schr\"oder numbers mentioned in the Introduction.
Let $\mathcal{S}_n$ be the class of Schr\"{o}der paths from (0,0) to $(2n,0)$
(such paths may take steps $\uu =(1,1)$, $\dd =(1,-1)$ and $\ee =(2,0)$ but never go below the $x$-axis).
This class $\mathcal{S}_n$ is enumerated by $r_n$, see for instance \cite{egge_mansour}.

In what follows, we will present a recursive procedure for
generating all Schr\"{o}der paths of length $n$. This procedure has
the property that if the paths in $\mathcal{S}_n$ are listed as
$(p_1,p_2,\ldots)$, then the sequence of permutations
$(\varphi(p_1),\varphi (p_2),\ldots)$ is a Gray code for
$\s_{n+1}(1243,2143)$ with distance 5. First we briefly describe
Egge and Mansour's~\cite [\S 4]{egge_mansour} bijection $\varphi:
\mathcal{S}_n \mapsto \s_{n+1}(1243,2143)$.

Let $p\in \mathcal{S}_n$ and let $s_i$ be the transposition $(i,i+1)$.
\begin{description}
\item[Step 1]
For all integers $a$, $m$ with $0\leq a,m <n$, if either of the
points $((8m+1)/4,(8a+5)/4)$ or $((8m+5)/4,(8a+1)/4)$ is contained
in the region beneath $p$ and above the $x$-axis, then place a
dot at that point.
For such a dot, with coordinates $(x,y)$, associate the label $s_i$ where $i=(1+x-y)/2$.
Let $j=1$.
\item[Step 2]
Choose the rightmost dot that has no line associated with it (with label $s_k$, say).
Draw a line parallel to the $x$-axis from this dot to the leftmost dot that may be
reached without crossing $p$ (which has label $s_l$, say).
Let $\sigma_j = s_k s_{k-1} \ldots  s_l$, where $s_i$, applied to a permutation $\pi$, exchanges $\pi_i$ with $\pi_{i+1}$.
If all dots have lines running through them, then go to
step 3. Otherwise increase $j$ by $1$ and repeat step 2.
\item[Step 3]
Let $\varphi(p) = \sigma_j \ldots \sigma_2 \sigma_1 (n+1,n,\ldots , 1)$.
\end{description}

\begin{example}
Consider the path $p \in \mathcal{S}_6$ in the diagram.\\[0.5em]
\centerline{\scalebox{0.8}{\includegraphics{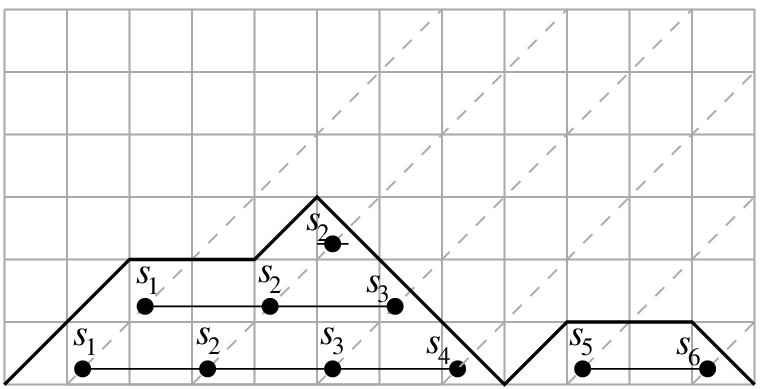}}}
The dots indicate the points realized in Step 1 and the lines
joining them indicate how each of the $\sigma$'s are formed. We have
$\sigma_1=s_6s_5$, $\sigma_2=s_4s_3s_2s_1$, $\sigma_3= s_3s_2s_1$
and $\sigma_4=s_2$. So
\begin{eqnarray*}
\varphi(p) &=& \sigma_4 \sigma_3 \sigma_2 \sigma_1 (7,6,5,4,3,2,1)\\
&=& s_2 \; s_3s_2s_1 \; s_4s_3s_2s_1 \; s_6s_5 (7,6,5,4,3,2,1)  \\
&=& (5,2,4,6,7,1,3).
\end{eqnarray*}
\end{example}

\subsection{Generating all Schr\"{o}der paths}
There are many ways to recursively generate all Schr\"{o}der paths of length $n$.
In what follows, we give one such procedure for generating the list $\Sch_n$.
This list has the property that
the corresponding permutations, under the bijection $\varphi$, are
a Gray code for Schr\"oder permutations of distance 5.

As in Section 2, we will use the convention that for any integer $i$,
$\mathcal{S}_n^i = \mathcal{S}_n$ if $i$ is odd and
$\mathcal{S}_n^i$ is $\mathcal{S}_n$ reversed, if $i$ is even.
Entry $j$ of $\mathcal{S}_n$ is denoted $\mathcal{S}_n(j)$.
In this notation we will have
\begin{eqnarray*}\label{Sch_list}
\mathcal{S}_n^i(j) &=& \left\{
        \begin{array}{ll}
        \mathcal{S}_n(j) & \mbox{ if $i$ is odd,}\\
        \mathcal{S}_n(r_n+1-j) & \mbox{ if $i$ is even.}\\
        \end{array}
        \right.
\end{eqnarray*}

Define $\Sch_0$ to be the list consisting of the single null Schr\"oder path,
denoted $\emptyset$.
For all $n\geq 1$, the paths are generated recursively via
\begin{eqnarray} \label{schr_gen}
\Sch_{n} &=& \bigoplus_{i=1}^{r_{n-1}} \left(\ee\, \Sch_{n-1}(i)\right)
        \oplus
        \bigoplus_{i=1}^{n}
        \bigoplus_{j=1}^{r_{i-1}}
        \bigoplus_{k=1}^{r_{n-i}}
        \left(\uu \, \Sch^{n+i}_{i-1}(j)\, \dd\,  \Sch^{j+B(i)+1}_{n-i}(k)\right).
\end{eqnarray}
$\Sch_n$ starts with $\Sch_{n-1}$ with each path preceded by $\ee$.
There follow all the Schr\"oder paths beginning with $\uu$. Let
$\dd$ be the partner of this $\uu$ (the $\dd$ that returns the path
to the $x$ axis). Then $\dd$ assumes positions $i=2,4,6,\ldots,2n$
in the path. For each $i$, we have the paths in $\uu\, \alpha\, \dd
\,\beta$, where $\alpha$ runs through $\Sch_{i-1}$ alternately
forwards and backwards, backwards the last time, and for each
$\alpha$, $\beta$ runs through $\Sch_{n-i}$ alternately forwards and
backwards, backwards the first time.

Furthermore, we define $\Phi_n(j) := \varphi(\Sch_n(j))$ and
\begin{eqnarray}
\Phi_n &:=& \bigoplus_{j=1}^{r_n} \Phi_n(j).
\end{eqnarray}
For example, we have $\Sch_1=(\ee , \uu\dd)$ and $\Sch_2=(\ee\ee ,
\ee\uu\dd , \uu\dd\uu\dd , \uu\dd\ee , \uu\uu\dd\dd , \uu\ee\dd )$.
Thus $\Phi_1 = (21,12)$ and $\Phi_2=(321,312, 132, 231,123,213)$.
The paths and permutations $\Sch_3$, $\Phi_3$, $\Sch_4$ and $\Phi_4$
are listed in Tables 2 and 3. For two paths $p_1,p_2 \in \Sch_n$, we
write $d(p_1,p_2)$ for the number of places in which the two paths
differ when each $\ee$ is replaced by $\rr\rr$ where $\rr$
represents (1,0); e.g. $d(\ee , \uu\dd )=2$ and $d(\uu\ee\dd
,\ee\uu\dd )=2$.

\begin{lemma}\label{31}
Equation (\ref{schr_gen}) generates all Schr\"oder paths of length $n$.
\end{lemma}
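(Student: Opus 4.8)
The plan is to prove Lemma~\ref{31} by strong induction on $n$, verifying that the recursive decomposition in Equation~(\ref{schr_gen}) is both \emph{exhaustive} (every Schr\"oder path of length $n$ appears) and \emph{non-redundant} (no path appears twice). The base case $n=0$ is immediate, since $\Sch_0$ is the single empty path. For the inductive step, fix $n\geq 1$ and assume $\Sch_m$ generates each path in $\mathcal{S}_m$ exactly once for all $m<n$.

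First I would establish the structural dichotomy for an arbitrary $p\in\mathcal{S}_n$: either $p$ begins with an $\ee$ step, or $p$ begins with a $\uu$ step. In the first case, $p=\ee\,p'$ for a unique $p'\in\mathcal{S}_{n-1}$, and these are exactly the paths enumerated by the first summand $\bigoplus_{i=1}^{r_{n-1}}(\ee\,\Sch_{n-1}(i))$; by the induction hypothesis each such $p'$ occurs exactly once in $\Sch_{n-1}$, hence each $\ee\,p'$ occurs exactly once here. In the second case, $p$ begins with $\uu$, and there is a unique \emph{partner} $\dd$ step — the first step at which the path returns to the $x$-axis; say it occupies position $2i$ for some $i\in\{1,\dots,n\}$ (the position is even because the path is at height $0$ there and height-parity equals step-count-parity along a $\uu/\dd/\ee$ path, noting $\ee$ contributes $2$ to the horizontal coordinate). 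Then $p=\uu\,\alpha\,\dd\,\beta$ where $\alpha$ is a Schr\"oder path of semilength $i-1$ staying strictly above the $x$-axis when prefixed by $\uu$ — equivalently an arbitrary element of $\mathcal{S}_{i-1}$ — and $\beta$ is an arbitrary element of $\mathcal{S}_{n-i}$. This factorization $(i,\alpha,\beta)$ is uniquely determined by $p$, and conversely every triple $(i,\alpha,\beta)$ with $1\leq i\leq n$, $\alpha\in\mathcal{S}_{i-1}$, $\beta\in\mathcal{S}_{n-i}$ yields a valid distinct path of length $n$. Matching this against the second summand of Equation~(\ref{schr_gen}), the index $i$ of the outer sum records the partner position, $j$ ranges over $\Sch_{i-1}$ (forwards or backwards, which is irrelevant for the \emph{set} of paths produced), and $k$ ranges over $\Sch_{n-i}$; by the induction hypothesis the $j$-loop hits each $\alpha\in\mathcal{S}_{i-1}$ once and the $k$-loop hits each $\beta\in\mathcal{S}_{n-i}$ once, so each triple is realized exactly once.

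Finally I would tally: the $\ee$-block contributes $r_{n-1}$ paths, and the $\uu$-block contributes $\sum_{i=1}^{n} r_{i-1}r_{n-i}$ paths, for a total of $r_{n-1}+\sum_{i=1}^{n}r_{i-1}r_{n-i}=r_n$ by the defining recurrence for the large Schr\"oder numbers quoted in the Introduction. Since the two blocks correspond to disjoint sets of paths (those starting with $\ee$ versus those starting with $\uu$) and within each block every path appears exactly once, and since the total count equals $|\mathcal{S}_n|=r_n$, the list $\Sch_n$ contains every Schr\"oder path of length $n$ exactly once.

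The only genuinely delicate point is the claim that reading a first-return factorization $p=\uu\,\alpha\,\dd\,\beta$ gives an \emph{arbitrary} $\alpha\in\mathcal{S}_{i-1}$: one must check that the condition ``$\uu\,\alpha$ stays weakly above the axis and $\uu\,\alpha\,\dd$ first touches down at the end'' is equivalent to ``$\alpha$ is any Schr\"oder path of semilength $i-1$'' — i.e.\ that raising by one $\uu$ exactly cancels the non-negativity constraint, with no extra restriction hiding in the $\ee$-steps of $\alpha$. This is the standard first-return decomposition for Schr\"oder paths and is what underlies the recurrence $r_n=r_{n-1}+\sum_{k=1}^{n}r_{k-1}r_{n-k}$; I would state it cleanly and invoke it, rather than re-deriving it. Everything else is bookkeeping with the concatenation operator and the induction hypothesis. (Note the alternating forwards/backwards traversal encoded by the superscripts on $\Sch^{n+i}_{i-1}$ and $\Sch^{j+B(i)+1}_{n-i}$ plays no role in this lemma — it matters only later, for the Gray code distance property — so I would remark that here we only need each relevant index range to be a permutation of $\{1,\dots,r_{i-1}\}$ or $\{1,\dots,r_{n-i}\}$.)
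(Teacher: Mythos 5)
Your proof is correct and follows essentially the same route as the paper's: the dichotomy between paths starting with $\ee$ and those starting with $\uu$, together with the first-return decomposition $\uu\,\alpha\,\dd\,\beta$ at $(2i,0)$, is exactly the paper's (much terser) argument. The extra counting cross-check against the recurrence for $r_n$ and the remark that the reversal superscripts are irrelevant here are harmless additions, not a different approach.
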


\begin{proof}
This is routine by induction. The first concatenation operator
forms all paths that begin with step $\ee$.
If a path does not begin with $\ee$, then it does not touch the
$x$ axis for the first time until $(2i,0)$.
A path of this form is uniquely expressed as
$\uu \alpha \dd \beta$ where $\alpha \in \Sch_{i-1}$ and $\beta \in \Sch_{n-i}$.
\end{proof}

\begin{lemma} \label{firstlast}
For all $n\geq1$, $\mathcal{S}_n(1)=\ee ^{n}$ and $\mathcal{S}_n(r_n)=\uu \ee^{n-1}\dd$.
\end{lemma}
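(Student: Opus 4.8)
The plan is to prove both equalities by induction on $n$, reading them directly off the recursive definition~(\ref{schr_gen}) of $\Sch_n$. For the base case $n=1$, the excerpt already records $\Sch_1=(\ee,\uu\dd)$, so $\Sch_1(1)=\ee=\ee^1$ and $\Sch_1(r_1)=\Sch_1(2)=\uu\dd=\uu\ee^0\dd$, as required.

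For the first claim, $\Sch_n(1)=\ee^n$: the very first block of the concatenation in~(\ref{schr_gen}) is $\bigoplus_{i=1}^{r_{n-1}}(\ee\,\Sch_{n-1}(i))$, so the first entry of $\Sch_n$ is $\ee\,\Sch_{n-1}(1)$. By the induction hypothesis $\Sch_{n-1}(1)=\ee^{n-1}$, hence $\Sch_n(1)=\ee\cdot\ee^{n-1}=\ee^n$. This half is entirely routine.

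The last entry is the one to watch. The final entry of $\Sch_n$ is the $(i=n)$, $j=r_{n-1}$, $k=r_{0}$ term of the second triple concatenation, namely $\uu\,\Sch^{2n}_{n-1}(r_{n-1})\,\dd\,\Sch^{r_{n-1}+B(n)+1}_{0}(1)$. Since $\Sch_0$ is the single empty path $\emptyset$, the last factor contributes nothing regardless of its superscript, and since $2n$ is even the superscript $2n$ means $\Sch_{n-1}$ is taken reversed, so $\Sch^{2n}_{n-1}(r_{n-1})=\Sch_{n-1}(r_{n-1}+1-r_{n-1})=\Sch_{n-1}(1)=\ee^{n-1}$ by the first claim (or by the induction hypothesis applied to the first equality). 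Therefore $\Sch_n(r_n)=\uu\,\ee^{n-1}\,\dd$. The one subtlety here, and the only place I expect to need care, is pinning down which $(i,j,k)$ triple is genuinely last under the $\bigoplus$ ordering convention spelled out after~(\ref{eq:Dndef}) — i.e. confirming that the lexicographically largest triple is $(n,r_{n-1},r_0)$ and that $\Sch_0$ has exactly one entry so $k$ ranges over $\{1\}$ only — but this is bookkeeping rather than a real obstacle; it is exactly the reasoning already used in the proof of Lemma~\ref{22}.
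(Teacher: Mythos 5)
Your proof is correct and follows essentially the same route as the paper's: induction on $n$, with the first entry read off the leading $\ee$-block and the last entry identified as the $(i=n,\,j=r_{n-1},\,k=r_0)$ term, where the even superscript $2n$ turns $\Sch_{n-1}^{2n}(r_{n-1})$ into $\Sch_{n-1}(1)=\ee^{n-1}$. You merely spell out the reversal bookkeeping a bit more explicitly than the paper does.
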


\begin{proof}
By Equation (\ref{schr_gen}) we have that $\mathcal{S}_1(1)\,=\,\ee$
and $\mathcal{S}_1(2) \,=\, \uu\dd$; so the result is true for
$n=1$. Assume it to be true for all $m\leq n-1$. Then $S_n(1) \,=\,
\ee \, \Sch_{n-1}(1) \,=\, \ee  \, \ee^{n-1} \,=\, \ee^n$.

Similarly, $\Sch_n(r_n)$ corresponds to Equation (\ref{schr_gen}) with $i=n,j=r_{n-1},k=r_{0}$, thus
\begin{eqnarray*}
\Sch_n(r_n) &=& \uu\,\Sch_{n-1}^{2n} (r_{n-1})\, \dd \;=\;\uu\, \ee^{n-1}\, \dd \; = \; \uu\, \ee^{n-1}\,\dd.
\end{eqnarray*}
Hence by induction the result is true for all $n\geq 1$.
\end{proof}

Under the bijection $\varphi$, we thus have

\begin{corollary}
For all $n>0$,
\begin{eqnarray*}
\Phi_n(1) &=& (n+1)\, n\, \ldots \, 1,\\
\Phi_n(r_n) &=& n\,\ldots \,1\,(n+1).
\end{eqnarray*}
\end{corollary}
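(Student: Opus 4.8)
The plan is to apply the bijection $\varphi$ directly to the two extreme paths identified in Lemma~\ref{firstlast}, namely $\Sch_n(1) = \ee^n$ and $\Sch_n(r_n) = \uu\,\ee^{n-1}\,\dd$, and then trace through Steps 1--3 of Egge and Mansour's construction in each case. Since $\Phi_n(j)$ is defined as $\varphi(\Sch_n(j))$, the corollary is precisely the statement that $\varphi(\ee^n) = (n+1)\,n\,\ldots\,1$ and $\varphi(\uu\,\ee^{n-1}\,\dd) = n\,\ldots\,1\,(n+1)$.

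First I would handle $p = \ee^n$. The region beneath $p$ and above the $x$-axis is degenerate: the path consists entirely of flat steps at height $0$, so there is no point with positive $y$-coordinate lying strictly beneath $p$. Hence Step 1 places no dots, Step 2 is vacuous (we proceed immediately to Step 3 with no $\sigma_j$ defined), and Step 3 gives $\varphi(\ee^n) = (n+1,n,\ldots,1)$, the empty product of transpositions applied to the reversing permutation. This matches the claimed value of $\Phi_n(1)$.

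Next I would handle $p = \uu\,\ee^{n-1}\,\dd$. This path rises to height $1$ immediately, stays at height $1$ across the $n-1$ flat steps, and descends at the very end. I would check which of the candidate points $((8m+1)/4,(8a+5)/4)$ and $((8m+5)/4,(8a+1)/4)$ for $0 \le a,m < n$ lie in the region beneath $p$. Since the path never exceeds height $1$, only points with $y$-coordinate below $1$ can qualify; among the candidates this forces $a = 0$, so the relevant points are those of the form $((8m+5)/4, 1/4)$, which have height $1/4 < 1$. For each such point with $x = (8m+5)/4$, the associated label is $s_i$ with $i = (1 + x - y)/2 = (1 + (8m+5)/4 - 1/4)/2 = (1 + 2m + 1)/2 = m+1$. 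One must determine exactly which values of $m$ give points actually lying beneath the path (i.e. under the flat portion between the initial $\uu$ and the final $\dd$); I expect this yields $m = 0,1,\ldots,n-2$, hence labels $s_1,\ldots,s_{n-1}$. In Step 2, all these dots sit at the same height $1/4$, so a single horizontal line connects the rightmost dot (label $s_{n-1}$) to the leftmost (label $s_1$), producing the single factor $\sigma_1 = s_{n-1}s_{n-2}\cdots s_1$. Then $\varphi(p) = \sigma_1 (n+1,n,\ldots,1) = s_{n-1}\cdots s_1 \,(n+1,n,\ldots,1)$; computing this product of adjacent transpositions (which cyclically shifts the first $n$ positions) should give $n\,(n-1)\,\ldots\,1\,(n+1)$, matching $\Phi_n(r_n)$.

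The main obstacle is the careful bookkeeping in Step 1 and Step 2 for the second path: one must verify precisely which lattice-type points of the prescribed form fall in the region beneath $\uu\,\ee^{n-1}\,\dd$ (attention is needed near the endpoints, where the $\uu$ and $\dd$ steps may exclude the extreme candidates), and confirm that in Step 2 the ``leftmost dot reachable without crossing $p$'' from the rightmost dot is indeed the dot labelled $s_1$, so that only one $\sigma$ is formed. Once the dot configuration is pinned down, the remaining permutation computation is the routine fact that $s_{n-1}s_{n-2}\cdots s_1$ applied on the right to $(n+1,n,\ldots,1)$ performs the stated rearrangement, which can be checked by a short direct calculation or by induction on $n$. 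I would also cross-check both formulas against the small cases already tabulated, e.g. $\Phi_1 = (21,12)$ and $\Phi_2 = (321,312,132,231,123,213)$, whose first and last entries are exactly $(n+1)n\cdots 1$ and $n\cdots 1\,(n+1)$ for $n = 1, 2$.
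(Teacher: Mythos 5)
Your overall strategy is exactly what the paper intends (the paper gives no explicit proof of this corollary, simply asserting that it follows from Lemma~\ref{firstlast} under the bijection $\varphi$), and your treatment of $\ee^n$ is correct. However, your bookkeeping for $\uu\,\ee^{n-1}\,\dd$ contains an off-by-one error that breaks the second computation. The dots of type $((8m+5)/4,\,1/4)$ lie beneath this path for \emph{all} $m=0,1,\ldots,n-1$, not just $m\le n-2$: the last candidate has $x=(8(n-1)+5)/4=2n-\tfrac34$, which sits under the final down-step, where the path still has height $\tfrac34>\tfrac14$. Hence the labels are $s_1,\ldots,s_n$ and $\sigma_1=s_n s_{n-1}\cdots s_1$, which moves the entry $n+1$ from position $1$ all the way to position $n+1$ and yields $n\,(n-1)\,\cdots\,1\,(n+1)$ as required. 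With your $\sigma_1=s_{n-1}\cdots s_1$ the entry $n+1$ only reaches position $n$, giving $n\,(n-1)\,\cdots\,2\,(n+1)\,1$, which is not the claimed permutation; and for $n=1$ your range of $m$ is empty, so you would get $\varphi(\uu\dd)=(2,1)$ instead of the correct $\Phi_1(2)=(1,2)$. The cross-check against the tabulated small cases that you yourself propose would have exposed this. Apart from that slip the argument is sound: no dots arise for $\ee^n$ since every candidate point has positive height while that path stays on the $x$-axis, a single horizontal line does connect all the dots of $\uu\,\ee^{n-1}\,\dd$ (the path never dips to height $\le\tfrac14$ between the extreme dots), and the final claim that $s_n\cdots s_1$ applied to $(n+1,n,\ldots,1)$ produces the stated permutation is the routine calculation you describe.
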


\begin{theorem}\label{th_Sch_Path}
For each $1\leq  q < r_n$, $\Sch_n(q)$ differs from $\Sch_n(q+1)$ in at most
5 places and $d(\Phi_n(q),\Phi_n(q+1))\leq 5$.
\end{theorem}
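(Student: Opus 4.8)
The plan is to prove both assertions by strong induction on $n$, following the recursive decomposition (\ref{schr_gen}) exactly as in the proof of Theorem~\ref{23}, and to treat the path statement and the permutation statement in parallel.

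For the path statement, write two consecutive entries of $\Sch_n$. By (\ref{schr_gen}) there are five cases: (I) both lie in the initial block $\bigoplus_i(\ee\,\Sch_{n-1}(i))$, so they are $\ee\,\Sch_{n-1}(i)$ and $\ee\,\Sch_{n-1}(i+1)$ and the difference is exactly $d(\Sch_{n-1}(i),\Sch_{n-1}(i+1))\le 5$ by induction; (II) the pair straddles the two blocks, i.e.\ one passes from $\ee\,\Sch_{n-1}(r_{n-1})$ to the first path of the $\uu$-block; using Lemma~\ref{firstlast} and $B(1)=0$ both paths are explicit, namely $\ee\,\uu\,\ee^{n-2}\,\dd$ and $\uu\,\dd\,\uu\,\ee^{n-2}\,\dd$, and they differ in $2$ places; (III) both lie in the $\uu$-block with the same $(i,j)$ and $k=t,t+1$, so the difference is $d(\Sch_{n-i}(t),\Sch_{n-i}(t+1))\le 5$; (IV) both lie in the $\uu$-block with the same $i$ while $(j,k)$ moves from $(t,r_{n-i})$ to $(t+1,1)$; here $\Sch_{n-i}^{t+B(i)+1}(r_{n-i})=\Sch_{n-i}^{t+B(i)+2}(1)$ because reversal exchanges the first and last entry of a list, so the difference is $d(\Sch_{i-1}(t),\Sch_{i-1}(t+1))\le 5$; (V) both lie in the $\uu$-block with $i$ moving from $t$ to $t+1$, so $(i,j,k)$ goes from $(t,r_{t-1},r_{n-t})$ to $(t+1,1,1)$. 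In case (V), Lemma~\ref{firstlast} and the Corollary following it, together with $B(t+1)=B(t)+r_{t-1}$ from Equation~(\ref{eq_1_3}), make both paths completely explicit words in $\uu,\dd,\ee$; splitting on the parity of $n+t$ and of $r_{t-1}+B(t)+1$ (the latter controlled by the parity of $B$ recorded in the Introduction), together with the degenerate small-$t$ and large-$t$ sub-cases handled just as in parts (b)--(c) of Theorem~\ref{23}, each sub-case is a short local rearrangement at the seam between the $\alpha$- and $\beta$-parts, and a direct comparison (after replacing every $\ee$ by $\rr\rr$) shows the two paths differ in at most $5$ places. This half is routine.

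For the permutation statement I would first record how $\varphi$ respects the decomposition (\ref{schr_gen}). Prepending an $\ee$ merely translates the whole region beneath a path two units to the right, so in Step~1 it carries every dot of $p$ to a dot of $\ee\,p$ with its label $s_i$ replaced by $s_{i+1}$, creates no new dot, and leaves every line of Step~2 intact; tracing this through Steps~1--3 yields the clean identity $\varphi_n(\ee\,p)=(n+1)\,\varphi_{n-1}(p)$ (prepend the value $n+1$). Hence in case (I) the two images differ exactly where $\Phi_{n-1}(i)$ and $\Phi_{n-1}(i+1)$ differ, i.e.\ in at most $5$ places by induction. The substantial point is the analogous description of $\varphi_n(\uu\,\alpha\,\dd\,\beta)$ in terms of $\varphi_{i-1}(\alpha)$ and $\varphi_{n-i}(\beta)$: the region beneath $\uu\,\alpha\,\dd\,\beta$ splits into a copy of the region beneath $\alpha$ (shifted up and right), the region beneath $\beta$ (shifted right), and a bounded number of extra dots along the arch $\uu\cdots\dd$, the only danger being a Step~2 line joining a dot of the $\alpha$-part to a dot of the $\beta$-part. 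Once this recursive description is in hand, cases (III) and (IV) reduce to the induction hypothesis for $\Phi_{n-i}$, resp.\ $\Phi_{i-1}$ (a local change in $\beta$, resp.\ $\alpha$, produces a correspondingly local change in $\varphi_n(\uu\,\alpha\,\dd\,\beta)$), and in cases (II) and (V) both permutations can be written down explicitly from Lemma~\ref{firstlast}, the Corollary, and the recursive formula, after which one checks directly that $d(\Phi_n(q),\Phi_n(q+1))\le 5$.

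The main obstacle is exactly this control of $\varphi$ on $\uu\,\alpha\,\dd\,\beta$: since $\varphi$ is a genuinely global construction and, as stressed in the Introduction, bijections need not preserve minimal changes, a five-step change in a path could a priori be amplified into a change in many entries of the permutation. What has to be proved is that the dots, and more importantly the Step~2 lines, of a path built from the recursion remain localized to the $\ee$-prefix, the $\alpha$-block, and the $\beta$-block with only a bounded seam between the last two; establishing this localization — and doing the explicit bookkeeping of the seam in cases (II) and (V) — is where the real work lies, and it is what pins the distance down to exactly $5$.
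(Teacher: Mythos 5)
Your treatment of the path half is essentially the paper's own argument (the same five cases arising from Equation~(\ref{schr_gen}), induction in cases (I), (III), (IV), and explicit words via Lemma~\ref{firstlast} in cases (II) and (V)), and your identity $\varphi(\ee\,p)=(n+1)\,\varphi(p)$ is exactly what the paper uses in case (i). But for the permutation half your text is a plan with an acknowledged hole precisely at the decisive point: you state that everything reduces to a ``recursive description'' of $\varphi(\uu\,\alpha\,\dd\,\beta)$ in terms of $\varphi(\alpha)$ and $\varphi(\beta)$, assert that a local change in $\alpha$ or $\beta$ produces a correspondingly local change in the image, and then say that establishing this localization ``is where the real work lies.'' That assertion is the theorem's entire content on the permutation side, and it cannot be waved through: as the paper itself notes, $d(\ee^n,\uu\ee^{n-1}\dd)=2$ while $\varphi(\ee^n)$ and $\varphi(\uu\ee^{n-1}\dd)$ differ in every position, so $\varphi$ does not preserve locality in general. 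A proof that defers this step proves nothing.

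It is also worth seeing how the paper avoids ever needing the fully general description you call for. In every case where the induction hypothesis must be invoked, the recursion pins one of the two blocks to an extreme (first or last) element of its sublist, which Lemma~\ref{firstlast} makes explicit. In case (iii) the $\alpha$-block is $\ee^{i-1}$ or $\uu\ee^{i-2}\dd$, so $\Phi_n(q)=a\circ\bigl(n+1,\ldots,n+2-i,\varphi(\Sch_{n-i}^{j+B(i)+1}(t))\bigr)$ for one of two explicit products $a$ of adjacent transpositions, and the bound transfers because $d(a\circ b,a\circ b')=d(b,b')$ for fixed $a$. In case (iv) the $\beta$-block is extreme, so the image is $\hat{\varphi}(\uu\,\Sch_{i-1}^{n+i}(t)\,\dd)$ followed by an explicit fixed suffix, and the bound comes from the induction hypothesis applied one level up (the paths $\uu\,\alpha\,\dd$ with $\alpha$ running through $\Sch_{i-1}$ form a contiguous block of $\Sch_i$). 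In case (v) both blocks are extreme and everything is written out. These two computations --- the explicit form of $\varphi$ on $\uu\,\alpha\,\dd\,\beta$ when one block is an extreme path, and the left-composition distance argument --- are the missing content of your proposal; without them the permutation statement is not established.
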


\begin{proof}
This proof follows by strong induction and analyzing the different
successors that occur in Equation (\ref{schr_gen}). The statement in
the Theorem holds for $n=0$ because there is only one permutation.
We assume the statement in the Theorem holds true for all $0\leq i
\leq n-1$. From Equation (\ref{schr_gen}) there are five cases to
consider:
\begin{enumerate}
\item[(i)] If $1\leq q < r_{n-1}-1$, then
$\Sch_n(q) \,=\, \ee  \,\Sch_{n-1}(q)$ and $\Sch_n(q+1)\,=\,\ee \,\Sch_{n-1}(q+1)$.
This gives
\begin{eqnarray*}
d(\Sch_n(q),\Sch_n(q+1)) & = & d(\Sch_{n-1}(q), \Sch_{n-1}(q+1)),
\end{eqnarray*}
which is $\leq 5$ by our hypothesis.
Thus
\begin{eqnarray*}
\Phi_n(q) &=& (n+1)\,\Phi_{n-1}(q) \mbox{ and }\\
\Phi_n(q+1) &=& (n+1)\,\Phi_{n-1}(q+1) ,
\end{eqnarray*}
and so $d(\Phi_n(q),\Phi_n(q+1)) \leq 5$.
\item[(ii)]
If $q=r_{n-1}$ then by Equation (\ref{schr_gen}) with $(i=1;j=1;k=1)$ and Lemma~\ref{firstlast} we have
\begin{eqnarray*}
\Sch_n(r_{n-1}) &=& \ee \, \Sch_{n-1}(r_{n-1})  \; = \; \ee\,\uu\,\ee^{n-2}\,\dd \mbox{ and }\\
\Sch_n(r_{n-1}+1) &=& \uu \, \dd  \, \Sch_{n-1}^2 (1)
    \; = \; \uu\,\dd\,\uu\,\ee^{n-2}\,\dd.
\end{eqnarray*}
Thus $d(\Sch_n(r_{n-1}),\Sch_n(r_{n-1}+1)) = d(\ee\uu\ee^{n-2}\dd, \uu\dd\uu\ee^{n-2}\dd) = 2$.
The corresponding permutations are
\begin{eqnarray*}
\Phi_n(r_{n-1}) &=& (n+1)\,(n-1)\,(n-2)\,\ldots\,2\,1\,n \mbox{ and }\\
\Phi_n(r_{n-1}+1) &=& (n-1)\,(n+1)\,(n-2)\,\ldots \, 2\, 1\, n,
\end{eqnarray*}
so that $d(\Phi_n(r_{n-1}),\Phi_n(r_{n-1}+1))\,=\, 2\, \leq 5$.
\item[(iii)]
If $\Sch_n(q)$ corresponds to $(i;j=r_{i-1};k=t)$ for some $1\leq t < r_{n-i}$ in
Equation (\ref{schr_gen}) then
\begin{eqnarray*}
\Sch_n(q)   &=& \uu \, \Sch_{i-1}^{n+i}(r_{i-1}) \, \dd \, \Sch_{n-i}^{j+B(i)+1}(t) \mbox{ and } \\
\Sch_n(q+1) &=& \uu \, \Sch_{i-1}^{n+i}(r_{i-1}) \, \dd \, \Sch_{n-i}^{j+B(i)+1}(t+1),
\end{eqnarray*}
and the distance of the two paths is no greater than 5, by the induction hypothesis.
Therefore
\begin{eqnarray*}
\Phi_n(q) &=& a\circ (n+1,\ldots,n+2-i,\varphi(\Sch_{n-i}^{j+B(i)+1}(t)))\mbox{ and } \\
\Phi_n(q+1) &=& a\circ (n+1,\ldots,n+2-i,\varphi(\Sch_{n-i}^{j+B(i)+1}(t+1))) ,
\end{eqnarray*}
where
\begin{eqnarray*}
a &=& \left\{
    \begin{array}{l@{\quad}l}
    s_i s_{i-1} \ldots s_1, & \mbox{if $n+i$ even},\\
    s_{i-1} \ldots s_1 s_i s_{i-1} \ldots s_1, & \mbox{if $n+i$ odd}.\\
    \end{array}
    \right.
\end{eqnarray*}
Using the fact that if $d(b,b')\leq x$, then $d(a\circ b,a\circ b')\leq x$, we have by the induction hypothesis
$d(\Phi_n(q),\Phi_n(q+1)) \leq 5$.

\item[(iv)]
If $\Sch_n(q)$ corresponds to Equation (\ref{schr_gen}) with triple
$(i;j=t;k=r_{n-i})$, where $1\leq t < r_{i-1}$, then the successor $\Sch_n(q+1)$ corresponds to
Equation (\ref{schr_gen}) with triple $(i;j=t+1;k=1)$. Consequently,
\begin{eqnarray*}
\Sch_n(q) &=& \uu  \, \Sch_{i-1}^{n+i}(t) \, \dd  \, \Sch_{n-i}^{t+B(i)+1}(r_{n-i}) \mbox{ and } \\
\Sch_n(q+1) &=& \uu \, \Sch_{i-1}^{n+i}(t+1) \, \dd  \, \Sch_{n-i}^{t+B(i)+2}(1).
\end{eqnarray*}
Since $\Sch_{n-i}^{t+B(i)+1}(r_{n-i}) = \Sch_{n-i}^{t+B(i)+2}(1)$, the result
for $\Sch_n$ follows by the induction hypothesis applied to $\Sch_{i-1}^{n+i}$.
Now if $t+B(i)+2$ is odd, then
\begin{eqnarray*}
\Phi_n(q) &=& \hat{\varphi}(\uu\,\Sch_{i-1}^{n+i}(t)\,\dd) \, i\,(i-1)\,\ldots\,1 \mbox{ and }\\
\Phi_n(q+1) &=& \hat{\varphi}(\uu\,\Sch_{i-1}^{n+i}(t+1)\,\dd) \,i\,(i-1)\,\ldots\,1 ,
\end{eqnarray*}
where $\hat{\varphi}(\uu\,\Sch_{i-1}^{n+i}(t)\,\dd)$ is
$\varphi(\uu\,\Sch_{i-1}^{n+i}(t)\,\dd)$ with every element
incremented by $i$. Since
$d(\Sch_{i-1}^{n+i}(t),\Sch_{i-1}^{n+i}(t+1)) \leq 5$, we have that
$d(\Phi_n(q),\Phi_n(q+1)) \leq 5$. The case where $t+B(i)+2$ is even
is handled in a similar manner with the suffix $i(i-1)\ldots 1$
replaced by $(i-1)\ldots 1 (i+1)$.

\item[(v)]
If $\Sch_n(q)$ corresponds to Equation (\ref{schr_gen}) with triple
$(i=t;j=r_{i-1};k=r_{n-i})$, where $1\leq t<n$, then $\Sch_n(q+1)$ corresponds to
Equation (\ref{schr_gen}) with triple $(i=t+1;j=1;k=1)$.
Consequently
\begin{eqnarray*}
\Sch_n(q) &=& \uu \, \Sch_{t-1}^{n+t}(r_{t-1}) \, \dd \, \Sch_{n-t}^{r_{t-1}+B(t)+1}(r_{n-t})  \mbox{ and }\\
\Sch_n(q+1) &=& \uu \, \Sch_{t}^{n+t+1}(1) \, \dd \, \Sch_{n-t-1}^{1+B(t+1)+1}(1).
\end{eqnarray*}
This divides into 4 sub-cases depending on the parity of the numbers $n+t$ and
$r_{t-1}+B(t)+1=B(t+1)+1$.
Each case is easily resolved by applying Lemma~\ref{firstlast}.
    \begin{enumerate}
    \item[(a)] If $n+t$ is even and $B(t+1)+1$ is even, then
    \begin{eqnarray*}
    \Sch_n(q) &=& \uu \, \Sch_{t-1}^2(r_{t-1}) \, \dd \, \Sch_{n-t}^2(r_{n-t}) \; = \; \uu \,\ee^{t-1} \, \dd \, \ee^{n-t} \mbox{ and } \\
    \Sch_n(q+1) &=& \uu \, \Sch_{t}(1) \, \dd \, \Sch_{n-t-1}(1) \; = \; \uu \, \ee^{t} \, \dd \, \ee^{n-t-1},
    \end{eqnarray*}
    which differ in two positions. This gives
    \begin{eqnarray*}
    \Phi_n(q) &=& n\,(n-1)\,\ldots\,(n-t+1)\,(n+1)\,(n-t)\,(n-t-1)\,\ldots \,1 \mbox{ and }\\
    \Phi_n(q+1) &=& n\,(n-1)\,\ldots\,(n-t)\,(n+1)\,(n-t-1)\,\ldots \,1,
    \end{eqnarray*}
    for all $1\leq t \leq n-1$. The two permutations differ by a transposing the elements at positions $(t+1,t+2)$.
    \item[(b)] If $n+t$ is odd and $B(t+1)+1$ is odd, then
    \begin{eqnarray*}
    \Sch_n(q) &=& \uu \, \Sch_{t-1}(r_{t-1}) \, \dd \, \Sch_{n-t}(r_{n-t}) \; =\; \uu \, \uu\ee^{t-2}\dd \, \dd \, \uu \ee^{n-t-1} \dd \mbox{ and }\\
    \Sch_n(q+1) &=& \uu \, \Sch_{t}^2(1) \, \dd \, \Sch_{n-t-1}^2(1) \; =\;  \uu  \, \uu\ee^{t-1}\dd \, \dd \, \uu \ee^{n-t-2} \dd ,
    \end{eqnarray*}
    which differ in five positions. This gives
    \begin{eqnarray*}
    \Phi_n(q) &=& (n-1)\cdots(n-t+2)(n-t)n(n+1)(n-t-1)\cdots 1(n-t+1) \mbox{ and }\\
    \Phi_n(q+1)&=& (n-1)\cdots(n-t+1)(n-t-1)n(n+1)(n-t-2)\cdots 1(n-t),
    \end{eqnarray*}
    for all $2\leq t \leq n-2$.
    These two permutations differ in five places (a transposition of the positions $(t-1,n)$ and a cycle of three elements at positions $(t ,t+1 ,t+2 )$).
For $t=1$ we have
    \begin{eqnarray*}
    \Phi_n(q) &=& n\,(n+1)\,(n-1)\,(n-2)\,\ldots \,1 \mbox{ and }\\
    \Phi_n(q+1) &=& (n-1)\, n \, (n+1)\, (n-2)\,\ldots \,1,
    \end{eqnarray*}
    which differ by a cycle of three elements at positions (1,2,3).
    Similarly, for $t=n-1$ we have
    \begin{eqnarray*}
    \Phi_n(q) &=& (n-1)\,\ldots \,1\, (n+1) \, n \, \mbox{ and }\\
    \Phi_n(q+1) &=& (n-1)\, \ldots \, 1 \, n \, (n+1),
    \end{eqnarray*}
    which differ by transposing the entries in positions $(n,n+1)$.
    \item[(c)] If $n+t$ is odd and $B(t+1)+1$ is even, then
    \begin{eqnarray*}
    \Sch_n(q) &=& \uu \, \Sch_{t-1}(r_{t-1}) \, \dd \, \Sch_{n-t}^2(r_{n-t}) \; = \; \uu \, \uu\ee^{t-2}\dd \, \dd \, \ee^{n-t} \mbox{ and } \\
    \Sch_n(q+1) &=& \uu \, \Sch_{t}^2(1) \, \dd \, \Sch_{n-t-1}(1) \; = \; \uu \, \uu\ee^{t-1}\dd \, \dd \,  \ee^{n-t-1}  .
    \end{eqnarray*}
    Thus $\Sch_n(q+1)$ differs from $\Sch_n(q)$ in four positions.
    This gives
    \begin{eqnarray*}
    \Phi_n(q) &=& (n-1)\,\ldots\,(n-t+1)\,n\,(n+1)\,(n-t)\,\ldots \,1\mbox{ and }\\
    \Phi_n(q+1) &=& (n-1)\,\ldots\,(n-t)\,n\,(n+1)\,(n-t-1)\,\ldots \,1,
    \end{eqnarray*}
    for all $t\geq 2$.
    The two permutations differ in three places (a rotation of three elements at positions $(t,t+1,t+2)$).
    The degenerate case $t=1$ is handled in the same manner as in part (a).
    \item[(d)] If $n+t$ is even and $B(t+1)+1$ is odd, then
    \begin{eqnarray*}
    \Sch_n(q) &=& \uu \, \Sch_{t-1}^2(r_{t-1}) \, \dd \, \Sch_{n-t}(r_{n-t}) \; =\; \uu \, \ee^{t-1} \, \dd \, \uu\ee^{n-t-1}\dd \mbox{ and } \\
    \Sch_n(q+1) &=& \uu \, \Sch_{t}(1) \, \dd \, \Sch_{n-t-1}^2(1) \; = \; \uu \, \ee^{t} \, \dd \,  \uu\,\ee^{n-t-2}\,\dd  .
    \end{eqnarray*}
    Thus $\Sch_n(q+1)$ differs from $\Sch_n(q)$ in five positions.
    This gives
    $$\begin{array}{l}
    \Phi_n(q)=n(n-1)\cdots(n-t+2)(n-t)(n+1)(n-t-1)\cdots 1(n-t+1)
    \end{array}$$
    and
    $$\begin{array}{l}
    \Phi_n(q+1)= n(n-1)\cdots(n-t+1)(n-t-1)(n+1)(n-t-2)\cdots 1(n-t),
    \end{array}$$
    for all $t\leq n-2$.
    The two permutations differ in four places (the two disjoint transpositions of elements at positions $(t,n+1)$ and $(t+1,t+2)$).
    The degenerate case $t=n-1$ is handled in the same manner as in part (a).
    \end{enumerate}
\end{enumerate}
\end{proof}

The lists $\Sch_3$, $\Phi_3$, $\Sch_4$ and $\Phi_4$ are given in Table
\ref{list_S_P_3} and \ref{list_S_P_4}.
Note that, unlike $\Phi_n$, the list $\Sch_n$ is a circular Gray
code; its first and last element have distance at most five.
The choice of a Gray code for Schr\"oder paths is critical in our
construction of a Gray code for $\s_n(1243,2143)$ since Egge and
Mansour's bijection $\varphi$, generally, does not preserves
distances. For instance $d(\ee^{n},\uu\ee^{n-1}\dd)=2$ but
$\varphi(\ee^{n})=(n+1)n\ldots 1$ differs from
$\varphi(\uu\ee^{n-1}\dd)=n\ldots 1(n+1)$ in all positions. Also,
there already exists a distance five Gray code for Schr\"oder paths
\cite{Vaj_02_1} but it is not transformed into a Gray code for
$\s_n(1243, 2143)$ by a known bijection. Finally, as in the previous
section, both Gray codes presented above can be implemented in
exhaustive generating algorithms.

\begin{table}
\begin{center}
\caption{\label{list_S_P_3}
The lists $\Sch_3$ and $\Phi_3$.}

{\small
 $\begin{array}{|c|c|c|} \hline
\begin{array}[t]{ccc} n & \Sch_3(n) & \Phi_3(n) \\ \hline
1 & \ee\ee\ee & 4321 \\
2 & \ee\ee\uu\dd & 4312 \\
3 & \ee\uu\dd\uu\dd & 4132 \\
4 & \ee\uu\dd\ee & 4231 \\
5 & \ee\uu\uu\dd\dd &4123 \\
6 & \ee\uu\ee\dd & 4213 \\
7 & \uu\dd \uu\ee\dd & 2413 \\
8 & \uu\dd \uu\uu\dd\dd &1423
\end{array} & \begin{array}[t]{ccc} n & \Sch_3(n) & \Phi_3(n) \\ \hline
9 & \uu\dd \uu\dd\ee &2431 \\
10 & \uu\dd \uu\dd\uu\dd &1432 \\
11 & \uu\dd \ee\uu\dd &3412 \\
12 & \uu\dd \ee\ee &3421 \\
13 & \uu\dd\ee \ee &3241 \\
14 & \uu\ee\dd \uu\dd &3142 \\
15 & \uu\uu\dd\dd \uu\dd &1342 \\
16 & \uu\uu\dd\dd \ee &2341
\end{array} & \begin{array}[t]{ccc} n & \Sch_3(n) & \Phi_3(n) \\ \hline
17 & \uu\uu\ee\dd \dd &2134 \\
18 & \uu \uu\uu\dd\dd \dd &1234 \\
19 & \uu \uu\dd\ee \dd &2314 \\
20 & \uu \uu\dd\uu\dd \dd & 1324 \\
21 & \uu \ee\uu\dd \dd &3124 \\
22 & \uu \ee\ee \dd & 3214
\end{array} \\ \hline
\end{array}$
}
\end{center}
\end{table}

\begin{table}
\begin{center}
\caption{\label{list_S_P_4}
The lists $\Sch_4$ and $\Phi_4$.}

{\small
$\begin{array}{|c|c|c|} \hline
\begin{array}[t]{ccc}
n & \Sch_4(n) & \Phi_4(n) \\ \hline
1 & \ee\ee\ee\ee & 54321 \\
2 & \ee\ee\ee\uu\dd & 54312 \\
3 & \ee\ee\uu\dd\uu\dd & 54132 \\
4 & \ee\ee\uu\dd\ee & 54231 \\
5 & \ee\ee\uu\uu\dd\dd & 54123 \\
6 & \ee\ee\uu\ee\dd & 54213 \\
7 & \ee\uu\dd\uu\ee\dd & 52413 \\
8 & \ee\uu\dd\uu\uu\dd\dd & 51423 \\
9 & \ee\uu\dd\uu\dd\ee & 52431 \\
10 & \ee\uu\dd\uu\dd\uu\dd & 51432 \\
11 & \ee\uu\dd\ee\uu\dd & 53412 \\
12 & \ee\uu\dd\ee\ee & 53421 \\
13 & \ee\uu\dd\ee\ee & 53241 \\
14 & \ee\uu\ee\dd\uu\dd & 53142 \\
15 & \ee\uu\uu\dd\dd\uu\dd & 51342 \\
16 & \ee\uu\uu\dd\dd\ee & 52341 \\
17 & \ee\uu\uu\ee\dd\dd & 52134 \\
18 & \ee\uu\uu\uu\dd\dd\dd & 51234 \\
19 & \ee\uu\uu\dd\ee\dd & 52314 \\
20 & \ee\uu\uu\dd\uu\dd\dd & 51324 \\
21 & \ee\uu\ee\uu\dd\dd & 53124 \\
22 & \ee\uu\ee\ee\dd & 53214 \\
23 & \uu\dd\uu\ee\ee\dd & 35214 \\
24 & \uu\dd\uu\ee\uu\dd\dd & 35124 \\
25 & \uu\dd\uu\uu\dd\uu\dd\dd & 15324 \\
26 & \uu\dd\uu\uu\dd\ee\dd & 25314 \\
27 & \uu\dd\uu\uu\uu\dd\dd\dd & 15234 \\
28 & \uu\dd\uu\uu\ee\dd\dd & 25134 \\
29 & \uu\dd\uu\uu\dd\dd\ee & 25341 \\
30 & \uu\dd\uu\uu\dd\dd\uu\dd & 15342 \\
\end{array} & \begin{array}[t]{ccc} n & \Sch_4(n) & \Phi_4(n) \\ \hline
31 & \uu\dd\uu\ee\dd\uu\dd & 35142 \\
32 & \uu\dd\uu\dd\ee\ee & 35241 \\
33 & \uu\dd\uu\dd\ee\ee & 35421 \\
34 & \uu\dd\uu\dd\ee\uu\dd & 35412 \\
35 & \uu\dd\uu\dd\uu\dd\uu\dd & 15432 \\
36 & \uu\dd\uu\dd\uu\dd\ee & 25431 \\
37 & \uu\dd\uu\dd\uu\uu\dd\dd & 15423 \\
38 & \uu\dd\uu\dd\uu\ee\dd & 25413 \\
39 & \uu\dd\ee\uu\ee\dd & 45213 \\
40 & \uu\dd\ee\uu\uu\dd\dd & 45123 \\
41 & \uu\dd\ee\uu\dd\ee & 45231 \\
42 & \uu\dd\ee\uu\dd\uu\dd & 45132 \\
43 & \uu\dd\ee\ee\uu\dd & 45312 \\
44 & \uu\dd\ee\ee\ee & 45321 \\
45 & \uu\uu\dd\dd\ee\ee &  34521 \\
46 & \uu\uu\dd\dd\ee\uu\dd & 34512\\
47 & \uu\uu\dd\dd\uu\dd\uu\dd & 14532\\
48 & \uu\uu\dd\dd\uu\dd\ee & 24531\\
49 & \uu\uu\dd\dd\uu\uu\dd\dd & 14523\\
50 & \uu\uu\dd\dd\uu\ee\dd & 24513\\
51 & \uu\ee\dd\uu\ee\dd & 42513\\
52 & \uu\ee\dd\uu\uu\dd\dd & 41523\\
53 & \uu\ee\dd\uu\dd\ee & 42531\\
54 & \uu\ee\dd\uu\dd\uu\dd & 41532\\
55 & \uu\ee\dd\ee\uu\dd & 43512\\
56 & \uu\ee\dd\ee\ee & 43521\\
57 & \uu\ee\ee\dd\ee & 43251\\
58 & \uu\ee\ee\dd\uu\dd & 43152\\
59 & \uu\ee\uu\dd\dd\uu\dd & 41352\\
60 & \uu\ee\uu\dd\dd\ee & 42351\\
\end{array} & \begin{array}[t]{ccc} n & \Sch_4(n) & \Phi_4(n) \\ \hline
61 & \uu\uu\dd\uu\dd\dd\ee & 24351\\
62 & \uu\uu\dd\uu\dd\dd\uu\dd & 14352\\
63 & \uu\uu\dd\ee\dd\uu\dd & 34152\\
64 & \uu\uu\dd\ee\dd\ee & 34251\\
65 & \uu\uu\uu\dd\dd\dd\ee & 23451\\
66 & \uu\uu\uu\dd\dd\dd\uu\dd & 13452\\
67 & \uu\uu\ee\dd\dd\uu\dd & 31452\\
68 & \uu\uu\ee\dd\dd\ee & 32451\\
69 & \uu\uu\ee\ee\dd\dd & 32145 \\
70 & \uu\uu\ee\uu\dd\dd\dd & 31245 \\
71 & \uu\uu\uu\dd\uu\dd\dd\dd & 13245 \\
72 & \uu\uu\uu\dd\ee\dd\dd & 23145 \\
73 & \uu\uu\uu\uu\dd\dd\dd\dd & 12345 \\
74 & \uu\uu\uu\ee\dd\dd\dd & 21345 \\
75 & \uu\uu\uu\dd\dd\ee\dd & 23415 \\
76 & \uu\uu\uu\dd\dd\uu\dd\dd & 13425 \\
77 & \uu\uu\ee\dd\uu\dd\dd & 31425 \\
78 & \uu\uu\dd\ee\ee\dd & 32415 \\
79 & \uu\uu\dd\ee\ee\dd & 34215 \\
80 & \uu\uu\dd\ee\uu\dd\dd & 34125 \\
81 & \uu\uu\dd\uu\dd\uu\dd\dd & 14325 \\
82 & \uu\uu\dd\uu\dd\ee\dd & 24315 \\
83 & \uu\uu\dd\uu\uu\dd\dd\dd & 14235 \\
84 & \uu\uu\dd\uu\ee\dd\dd & 24135 \\
85 & \uu\ee\uu\ee\dd\dd & 42135 \\
86 & \uu\ee\uu\uu\dd\dd\dd & 41235 \\
87 & \uu\ee\uu\dd\ee\dd & 42315 \\
88 & \uu\ee\uu\dd\uu\dd\dd & 41325 \\
89 & \uu\ee\ee\uu\dd\dd & 43125 \\
90 & \uu\ee\ee\ee\dd & 43215 \\
\end{array} \\ \hline
\end{array}$
}
\end{center}
\end{table}

\section{Regular patterns and Gray codes}

Here we present a general generating algorithm and Gray codes for
permutations avoiding a set of patterns $T$, provided
$T$ satisfies certain constraints.
The operations of reverse, complement and their
composition extend these to codes for $T^c$, $T^r$ and $T^{rc}$.
Our approach is based on
generating trees; see  \cite{BBGP,bernini,Chow_West,Wes_94} and the references therein.
In \cite{bernini} a general Gray code for a very large family of combinatorial
objects is given; objects are encoded by their corresponding path in the
generating tree and often it is possible to translate the
obtained codes into codes for objects.
The method we present here is, in a way,
complementary to that of \cite{bernini}: it
works for a large family of patterns and
objects are produced in `natural' representation. It
is also easily implemented by efficient generating algorithms.
Its disadvantage is, for example, that it gives a distance-5 Gray code for
$\s(231)$, and so is less optimal than the one given in Section 2;
and it does not work for $T=\{1243,2143\}$ (the set of patterns
considered in Section 3) since $T$ does not satisfy the required criteria.

We begin by explaining the generating trees technique in the context
of pattern avoidance. The {\it sites} of $\pi\in \s_n$ are the
positions between two consecutive entries, as well as before the first and
after the last entry; and they are numbered, from right to left,
from $1$ to $n+1$. For a  permutation $\pi\in \s_n(T)$, with $T$ a
set of forbidden patterns, $i$ is an {\it active site} if the
permutation obtained from $\pi$ by inserting $n+1$ into its $i$-th
site is a permutation in $\s_{n+1}(T)$; we call such a permutation
in $\s_{n+1}(T)$ a {\it son} of $\pi$.
Clearly, if $\pi\in \s_{n+1}(T)$,
by erasing $n+1$ in $\pi$ one obtains a permutation in $\s_n(T)$;
or equivalently, any permutation in $\s_{n+1}(T)$ is obtained from a permutation in
$\s_n(T)$ by inserting $n+1$ into one of its active sites.
The active sites of a
permutation $\pi\in \s_n(T)$ are {\it right justified} if the sites
to the right of any active site are also active. We denote by
$\chi_T(i,\pi)$ the number of active sites of the permutation
obtained from $\pi$ by inserting $n+1$ into its $i$-th active site.

A set of patterns $T$ is called {\it regular} if for any $n\geq 1$ and $\pi\in \s_n(T)$

\begin{itemize}
     \item[$\bullet$] $\pi$ has at least two active sites and they are right justified;
     \item[$\bullet$] $\chi_T(i,\pi)$ does not depend on $\pi$
    but only on the number $k$ of active sites of $\pi$; in this case
    we denote $\chi_T(i,\pi)$ by
    $\chi_T(i,k)$.
\end{itemize}

In what follows we shall assume that $T$ is a regular set of patterns.
Several examples of regular patterns $T$, together with their respective $\chi$ functions,
are given at the end of this section.

\medskip

Now we will describe an efficient (constant amortized time)
generating algorithm for permutations avoiding a regular set of
patterns; then we show how we can modify it to obtain Gray codes. If
$n=1$, then $\s_n(T)=\{(1)\}$; otherwise $\s_n(T)= \cup_{\pi\in
\s_{n-1}(T)}\{\sigma\in \s_n\,|\,\sigma {\rm \ is\ a\ son\ of\ } \pi
\}$. An efficient implementation is based on the following
considerations and its pseudocode is given in Algorithm 2. The
permutation obtained from $\pi\in \s_{n-1}(T)$ by inserting $n$ into
its first (rightmost) active site is $\pi n$. Let $\sigma$ (resp.
$\tau$) be the permutation obtained from $\pi$ by inserting $n$ into
the $i$-th (resp. $(i+1)$-th) active site of $\pi$. In this case
$\tau$ is obtained by transposing the entries in positions $n-i+1$
and $n-i$ of $\sigma$. In addition, if $\chi_T(i,k)$ is calculable,
from $i$ and $k$, in constant time, then the obtained algorithm,
Gen\_Avoid (Algorithm 2), runs in constant amortized time. Indeed,
this algorithm satisfies the following properties:
\begin{itemize}
\item the
total amount of computation in each call is proportional with the
number of direct calls produced by this call,
\item each non-terminal call produces at least two
     recursive calls (i.e., there is no call of degree one), and
\item each terminal call (degree-zero call) produces a new permutation,
\end{itemize}
see for instance \cite{Rus_00} and Figure \ref{fig_v} (a) for an example.

\begin{figure}
\caption{\label{fig_v}
(a) The generating tree induced by the call of Gen\_Avoid(1,2) for $n=4$
and with $\chi$ defined by: $\chi(1,k)=k+1$ and $\chi(i,k)=i$ if $i\neq 1$.
It corresponds to the forbidden pattern $T=\{321\}$. The active sites
are represented by a dot.
(b) The first four levels of the generating tree induced by the definition
(\ref{gen_C}) with the same function $\chi$;
they yield the lists $\C_{i}(321)$ for the sets
$\s_i(321)$, $1\leq i \leq 4$.
This tree is the Gray-code ordered version of the one in (a).
Permutations in bold have direction $down$ and the others direction $up$.
}
\begin{tabular}{cc}
\scalebox{0.8}{\includegraphics{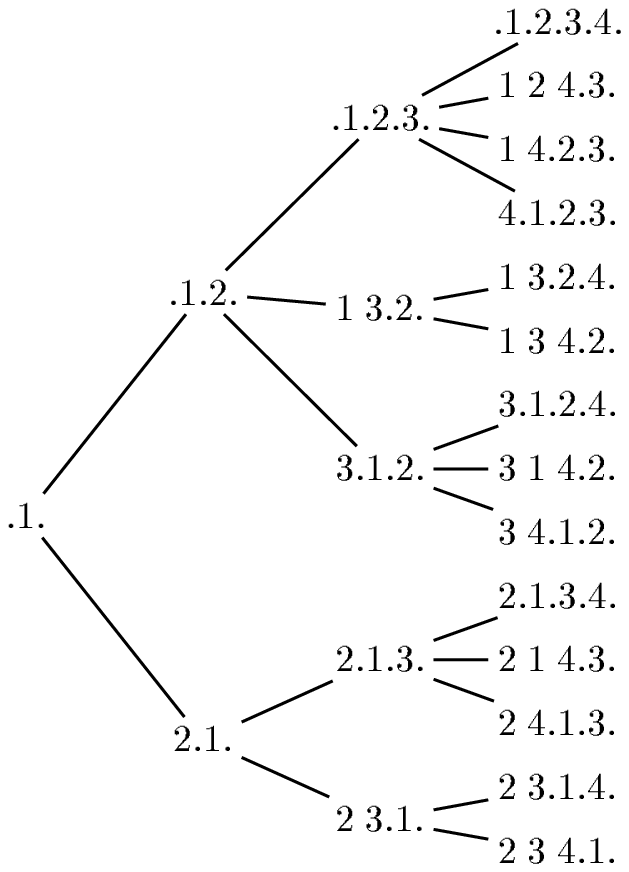}}&\scalebox{0.8}{\includegraphics{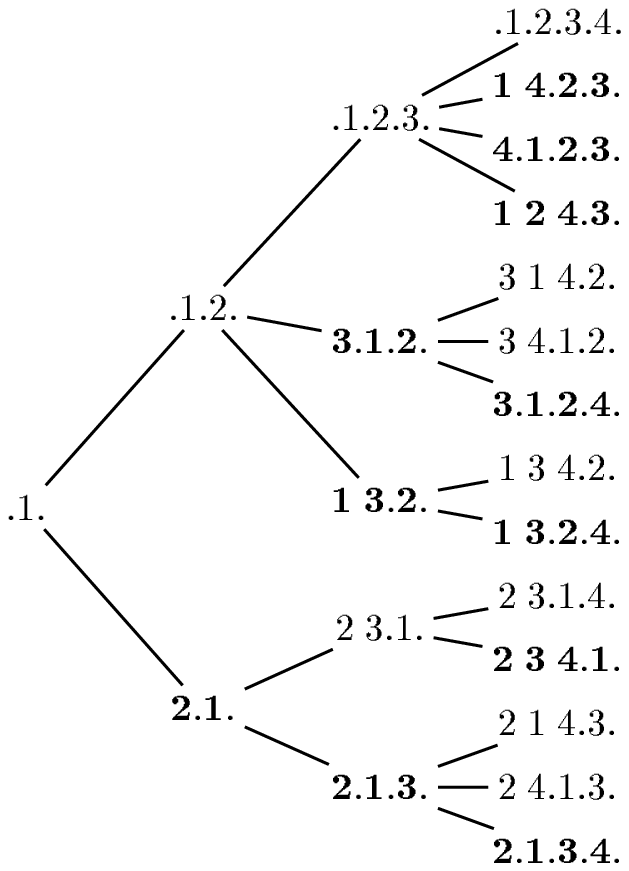}}\\
(a) & (b)
\end{tabular}
\end{figure}

Now we show how one can modify the generating procedure Gen\_Avoid sketched above in order to
produce  a Gray code listing.
We associate to each permutation  $\pi\in \s_n(T)$
\begin{itemize}
\item a {\it direction}, {\it up} or {\it down}, and we denote by $\pi^1$
     the permutation $\pi$ with  direction {\it up} and by $\pi^0$
     the permutation $\pi$ with direction {\it down}.
     A permutation together with its direction is called {\it directed permutation}.
\item a list of successors, each of them a permutation in  $\s_{n+1}(T)$.
     The first permutation in the list of successors of
     $\pi^1$ has direction {\it up} and all others have direction {\it down}.
     The list of successors of $\pi^0$ is obtained by reversing
     the list of successors of $\pi^1$ and then reversing the direction of each
     element of the list.
\end{itemize}

Let $\pi\in \s_n(T)$ with $k$ successors (or, equivalently, $k$ active sites),
and $L_k$ be the unimodal sequence of integers

\begin{equation}\label{ordered_grow}
L_k\;=\;
\left\{ \begin{array}{ll}
      1,3,5,\ldots, k,   (k-1),(k-3),\ldots,4,2   & {\rm if\ } k {\rm\ is\ odd}       \\
      1,3,5,\ldots,(k-1),k,(k-2),\ldots,4,2   & {\rm if\ } k {\rm\ is\ even}.
     \end{array}
     \right.
\end{equation}
This list is very important in our construction of a Gray code;
it has the following critical properties, independent of $k$:
it begins and ends with the same element, and the difference between
two consecutive elements is less than or equal to $2$.

For  a permutation $\pi$ with $k$ active sites,
the list of successors of $\pi^1$, denoted by $\phi(\pi^1)$, is a
list of $k$ directed permutations in $\s_{n+1}(T)$: its $j$-th
element is obtained from $\pi$ by inserting $n+1$ in the $L_k(j)$-th
active site of $\pi$; and as stated above, the first permutation in $\phi(\pi^1)$ has
direction {\it up} and all others have direction {\it down}.
And we extend $\phi$ in natural way to lists of directed permutations:
$\phi(\pi(1),\pi(2),\ldots)$ is simply the list $\phi(\pi(1)),\phi(\pi(2)),\ldots$.
This kind of distribution of directions among the successors of an object
is similar to that of \cite{Weston_Vaj}.

Let $d_n={\rm card}(\s_n(T))$ and define the list
\begin{equation}\label{gen_C}
\mathcal C_n(T)=
\mathcal C_n= \bigoplus_{q=1}^{d_{n-1}}\phi({\mathcal C}_{n-1}(q))
\end{equation}
where ${\mathcal C}_n(q)$ is the $q$-th directed permutation of
${\mathcal C}_n$, anchored by ${\mathcal C}_{1}= (1)^1$. We will
show that the list of permutations in ${\mathcal C}_n$ (regardless
of their directions) is a Gray code with distance $5$ for the set
$\s_{n}(T)$. With these considerations in mind we have

\begin{lemma}$ $
\begin{itemize}
\item
The list ${\mathcal C}_{n}$ contains all $T $-avoiding
permutations exactly once;
\item
The first permutation in ${\mathcal C}_{n}$ is $(1,\ldots,n)$ and
the last one is $(2,1,3,\ldots,n)$.
\end{itemize}
\end{lemma}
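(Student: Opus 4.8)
The plan is to prove both bullet points by strong induction on $n$, mirroring the structure of the proofs of Lemmas~\ref{21} and~\ref{22}. The base case $n=1$ is immediate: $\mathcal C_1=(1)^1$ consists of the single permutation $(1)$, which is simultaneously ``$(1,\ldots,n)$'' and ``$(2,1,3,\ldots,n)$'' when $n=1$. For the inductive step, assume $\mathcal C_{n-1}$ contains every permutation of $\s_{n-1}(T)$ exactly once, with first entry $(1,\ldots,n-1)$ and last entry $(2,1,3,\ldots,n-1)$.

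For the first bullet, I would argue as follows. By the ECO/generating-tree correspondence recalled just before the statement, every $\sigma\in\s_n(T)$ is obtained from a unique $\pi\in\s_{n-1}(T)$ by inserting $n$ into one of the active sites of $\pi$; conversely each such insertion into an active site yields an element of $\s_n(T)$. Now $\mathcal C_n=\bigoplus_{q=1}^{d_{n-1}}\phi(\mathcal C_{n-1}(q))$, and for a permutation $\pi$ with $k$ active sites the list $\phi(\pi^1)$ has exactly $k$ entries, its $j$-th entry being $\pi$ with $n$ inserted into the $L_k(j)$-th active site. Since $L_k$ is a permutation of $\{1,2,\ldots,k\}$ (it lists the odd numbers up to $k$ in increasing order and then the even numbers down from the top — a bijection onto $\{1,\dots,k\}$), $\phi(\pi^1)$ lists all $k$ sons of $\pi$, each exactly once; the same holds for $\phi(\pi^0)$, which is just $\phi(\pi^1)$ reversed with directions flipped, hence contains the same underlying permutations. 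Combining this with the induction hypothesis that $\mathcal C_{n-1}$ enumerates $\s_{n-1}(T)$ without repetition, and with the fact that distinct parents produce disjoint sets of sons, we conclude that $\mathcal C_n$ lists each element of $\s_n(T)$ exactly once. One should note this argument does not use regularity of $T$ beyond the existence of the generating tree; regularity (in particular right-justification and $k\ge 2$) will matter for the distance bound proved later, not here.

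For the second bullet, I would trace the very first and very last entries of $\mathcal C_n$. The first entry of $\mathcal C_n$ is the first entry of $\phi(\mathcal C_{n-1}(1))=\phi\big((1,\ldots,n-1)^1\big)$, which by definition is $(1,\ldots,n-1)$ with $n$ inserted into its $L_k(1)$-th active site; since $L_k(1)=1$ always, this is insertion into the rightmost (first) active site, giving $(1,\ldots,n-1,n)=(1,\ldots,n)$. The last entry of $\mathcal C_n$ is the last entry of $\phi(\mathcal C_{n-1}(d_{n-1}))=\phi\big((2,1,3,\ldots,n-1)^{\epsilon}\big)$ for the appropriate direction $\epsilon$; whatever $\epsilon$ is, the underlying last permutation equals $(2,1,3,\ldots,n-1)$ with $n$ inserted into its $L_k(k)$-th active site, and the unimodal list $L_k$ satisfies $L_k(k)=1$ (it begins and ends with the same element, namely $1$, as stated in the text). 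So again $n$ is appended at the rightmost active site, yielding $(2,1,3,\ldots,n-1,n)=(2,1,3,\ldots,n)$, completing the induction.

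The main obstacle, such as it is, is the bookkeeping around directions: one must be careful that for the \emph{last} parent $\mathcal C_{n-1}(d_{n-1})$ the direction could be \emph{down}, so that its successor list is $\phi$ applied to a down-permutation; but since reversing the successor list of $\pi^1$ and flipping directions leaves the multiset of underlying permutations unchanged and still has $L_k(k)=1$ governing its last element (because $L_k(1)=L_k(k)$), the conclusion is unaffected. Equally, one should confirm the right edge of the tree behaves as claimed: that the first active site of any $\pi\in\s_n(T)$ is indeed active — this is exactly the right-justification hypothesis in the definition of a regular set of patterns, together with the guarantee that $\pi$ has at least two active sites, so appending $n$ is always legal. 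With these two small points checked, the induction goes through cleanly.
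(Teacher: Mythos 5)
The paper states this lemma without proof (it is treated as immediate from the construction), so there is nothing to compare against line by line; judged on its own terms, your argument for the first bullet is sound: $L_k$ as defined in \eqref{ordered_grow} is a bijection onto $\{1,\ldots,k\}$, so $\phi(\pi^i)$ lists each son of $\pi$ exactly once, distinct parents have disjoint sons, and every $\sigma\in\s_n(T)$ has a unique parent obtained by erasing $n$.

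Your argument for the second bullet, however, rests on a false reading of $L_k$. You claim $L_k(k)=1$ and, in your final paragraph, that $L_k(1)=L_k(k)$. Neither is true: by \eqref{ordered_grow} the sequence $L_k$ always \emph{ends with} $2$, not $1$. The paper's sentence ``it begins and ends with the same element, independent of $k$'' means that the first entry (always $1$) and the last entry (always $2$) do not depend on $k$ --- not that they coincide. Consequently your ``whatever $\epsilon$ is'' step fails: if the last parent $\mathcal C_{n-1}(d_{n-1})=(2,1,3,\ldots,n-1)^\epsilon$ had direction $\epsilon=1$, the last entry of its successor list would be the insertion of $n$ into the $L_k(k)$-th $=2$nd active site, producing $(2,1,3,\ldots,n-2,n,n-1)$, not $(2,1,3,\ldots,n)$. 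The conclusion you reach is nonetheless correct, but only because the direction is in fact $\epsilon=0$: the last entry of $\mathcal C_{n-1}$ is $r^{n-2}((1)^1)$ and every application of $r$ yields direction \emph{down} (this is Lemma~\ref{puis}), and for a down permutation the last entry of $\phi(\pi^0)$ is the \emph{first} entry of $\phi(\pi^1)$, governed by $L_k(1)=1$, i.e.\ $n$ appended at the rightmost site. So the missing ingredient is precisely the determination of the direction of the last (and, symmetrically, the first) entry of $\mathcal C_{n-1}$; you cannot sidestep it, because the two candidate insertion sites ($L_k(1)=1$ versus $L_k(k)=2$) give different permutations. With that repair --- and the analogous observation that the first entry of $\mathcal C_{n-1}$ always has direction \emph{up}, which you assumed silently when writing $\phi\bigl((1,\ldots,n-1)^1\bigr)$ --- the induction goes through.
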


\begin{lemma}
\label{direct}
If $\pi^i$ is a directed permutation in ${\mathcal C}_{n}$
(that is, $\pi$ is a length $n$ permutation and $i\in\{0,1\}$ is a direction), then
two successive permutations in $\phi(\pi^i)$,
say $\sigma$ and $\tau$, differ in at most three positions.
\end{lemma}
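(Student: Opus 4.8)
The statement is about the internal structure of a single block $\phi(\pi^i)$ of successors, so the plan is to unwind the definitions of $\phi$, the list $L_k$, and the insertion operation, and then track exactly which positions change as we pass from the $j$-th successor to the $(j+1)$-th. By the convention that $\phi(\pi^0)$ is the reverse of $\phi(\pi^1)$ with directions flipped, the multiset of consecutive distances in $\phi(\pi^0)$ equals that in $\phi(\pi^1)$, so it suffices to treat the case $i=1$. Fix $\pi\in\s_n(T)$ with $k$ active sites; since $T$ is regular, $k\ge 2$. The $j$-th element $\sigma$ of $\phi(\pi^1)$ is obtained by inserting $n+1$ into the $L_k(j)$-th active site of $\pi$, and the $(j+1)$-th element $\tau$ by inserting into the $L_k(j+1)$-th active site.

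\textbf{Key steps.} First I would record the elementary fact, already noted in the paragraph preceding Algorithm 2, that if $\sigma$ and $\tau$ are obtained from the same $\pi$ by inserting $n+1$ into active sites numbered $a$ and $b$ respectively (counting from the right, and with the active sites right-justified so these are among the rightmost $k$ positions), then $\sigma$ and $\tau$ agree on all entries of $\pi$ that lie to the left of both insertion points, agree on all entries of $\pi$ lying to the right of both insertion points, and differ only in the window of positions between site $b$ and site $a$ inclusive. Concretely, if $a<b$ then writing $\pi$'s relevant entries as $\ldots\, \pi_{n-b+2}\,\pi_{n-b+3}\,\cdots\,\pi_{n-a+1}\,\ldots$, the permutation $\sigma$ places $n+1$ just after $\pi_{n-a+1}$ while $\tau$ places it just after $\pi_{n-b+2}$; so $\tau$ is obtained from $\sigma$ by a cyclic rotation of the $b-a+1$ entries occupying those consecutive positions (the entry $n+1$ moves left past $\pi_{n-b+2},\ldots,\pi_{n-a+1}$, which each shift one place right). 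Hence $d(\sigma,\tau)=b-a+1=|L_k(j)-L_k(j+1)|+1$. Second, I would invoke the critical property of $L_k$ stated in the excerpt just after its definition: consecutive entries of $L_k$ differ by at most $2$, i.e. $|L_k(j)-L_k(j+1)|\le 2$ for all $j$. Combining, $d(\sigma,\tau)\le 2+1=3$, which is the claim. Finally I would handle the trivial bookkeeping: when $n+1$ is inserted into the rightmost active site (site $1$), the ``window'' still makes sense and the formula $d=b-a+1$ is unaffected; and the first-versus-last elements of the block are not at issue here since the lemma only concerns \emph{successive} elements \emph{within} $\phi(\pi^i)$.

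\textbf{Main obstacle.} The only subtle point is making the ``window of changed positions'' argument airtight when the two insertion sites are on opposite sides of entries that themselves move — but since both sites lie among the right-justified active sites and we are inserting the single new largest element $n+1$, the affected entries are precisely $n+1$ together with the $\min(a,b),\ldots,\max(a,b)-1$ entries of $\pi$ between them, and everything else is fixed; so the distance is exactly $|a-b|+1$ with no hidden extra changes. Thus the real content is entirely the bound $|L_k(j)-L_k(j+1)|\le 2$, which is immediate from the explicit unimodal form of $L_k$ in~(\ref{ordered_grow}): along the increasing run the gap is exactly $2$, at the peak it is $1$, and along the decreasing run it is again exactly $2$. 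This gives $d(\sigma,\tau)\le 3$ in every case, completing the proof.
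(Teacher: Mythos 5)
Your proof is correct and follows essentially the same route as the paper's: reduce to the case $i=1$ via the reversal convention, invoke $|L_k(j)-L_k(j+1)|\le 2$, and observe that inserting $n+1$ into two active sites whose indices differ by $\delta$ produces permutations at distance $\delta+1$. The paper leaves this last insertion-distance fact implicit, whereas you verify it explicitly with the rotation-of-a-window argument; the content is the same.
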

\begin{proof}
Since $\phi(\pi^0)$ is the reverse of $\phi(\pi^1)$ it is enough to
prove the statement for $i=1$; so suppose that $i=1$. Let
$\sigma$ and $\tau$ be the permutations obtained by inserting $n+1$
in the $L_k(j)$-th and $L_k(j+1)$-th active site of $\pi$, respectively,
for some $j$. Since $|L_k(j)-L_k(j+1)|\leq 2$, $d(\sigma,\tau)\leq 3$.
\end{proof}

Let $\pi^i\in {\mathcal C}_{n}$ and $\ell(\pi^i)$ denote the first (leftmost)
element of the list $\phi(\pi^i)$, $\ell^2(\pi^i)=\ell(\ell(\pi^i))$,
and $\ell^s(\pi^i)=\ell(\ell^{s-1}(\pi^i))$.
Similarly, $r(\pi^i)$ denotes the last (rightmost)
element of the list $\phi(\pi^i)$, and $r^s(\pi^i)$ is defined analogously.
For $\pi^i\in {\mathcal C}_{n}$ let ${\rm dir}(\pi^i)=i\in\{0,1\}$.
By the recursive application of the definition of the list $\phi(\pi^i)$ we have
the following lemma.

\begin{lemma}\label{puis}
If $\pi^i\in {\mathcal C}_{n}$,
then ${\rm dir}(\ell^s(\pi^i))=1$ and ${\rm dir}(r^s(\pi^i))=0$ for any $s\geq 1$.
\end{lemma}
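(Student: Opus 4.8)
The plan is to prove the statement by a straightforward induction on $s$, using the definition of $\phi$ applied to a directed permutation together with the direction‑distribution rule established just before Lemma~\ref{puis}. The base case $s=1$ is immediate: by the stated rule, for any directed permutation $\rho^i$ the first element of $\phi(\rho^i)$ has direction \emph{up} when $i=1$ and, since $\phi(\rho^0)$ is obtained from $\phi(\rho^1)$ by reversing the list and flipping every direction, the first element of $\phi(\rho^0)$ is the image of the \emph{last} element of $\phi(\rho^1)$ with its direction reversed; as all elements of $\phi(\rho^1)$ except the first have direction \emph{down}, that last element has direction \emph{down}, so after flipping it has direction \emph{up}. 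Hence $\mathrm{dir}(\ell(\rho^i))=1$ regardless of $i$. Symmetrically, the last element of $\phi(\rho^1)$ has direction \emph{down} (it is not the first), and the last element of $\phi(\rho^0)$ is the image of the first element of $\phi(\rho^1)$, which has direction \emph{up}, reversed to \emph{down}; so $\mathrm{dir}(r(\rho^i))=0$ regardless of $i$.

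For the inductive step, suppose $\mathrm{dir}(\ell^{s-1}(\pi^i))=1$ for all directed permutations appearing in ${\mathcal C}_n$‑type lists. Write $\ell^{s-1}(\pi^i)=\rho^{1}$ for the appropriate permutation $\rho$. Then $\ell^{s}(\pi^i)=\ell(\rho^{1})$, and by the base‑case analysis $\mathrm{dir}(\ell(\rho^{1}))=1$, which is exactly what we want. The argument for $r^s$ is entirely parallel: $\mathrm{dir}(r^{s-1}(\pi^i))=0$ by hypothesis, write $r^{s-1}(\pi^i)=\rho^{0}$, and then $\mathrm{dir}(r(\rho^{0}))=0$ by the base case, completing the induction.

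I do not anticipate a genuine obstacle here; the only point that needs care is to check that the base case really is direction‑independent — that is, that $\mathrm{dir}(\ell(\rho^i))=1$ and $\mathrm{dir}(r(\rho^i))=0$ hold for \emph{both} values of $i$, not just $i=1$ — since the induction pivots on feeding back a directed permutation whose direction is already pinned down. This is precisely the content of the first paragraph above, and it follows cleanly from the ``reverse the list, reverse each direction'' prescription for $\phi(\rho^0)$ combined with the ``first up, rest down'' prescription for $\phi(\rho^1)$. Once that is in hand, the statement is just the observation that iterating $\ell$ keeps us permanently in the ``up'' case and iterating $r$ keeps us permanently in the ``down'' case.
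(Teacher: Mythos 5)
Your proof is correct and follows essentially the same route as the paper's (which simply asserts, in two lines, that the first successor of $\pi^i$ has direction \emph{up} and the last has direction \emph{down} for either value of $i$, and then iterates). Your version just makes explicit the direction-independence check for $i=0$ via the ``reverse the list, flip the directions'' rule — which implicitly uses that each successor list has at least two elements, guaranteed by the regularity assumption — and then the induction on $s$.
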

\begin{proof}
$\ell(\pi^i)$, the first successor of $\pi^i$ has direction $up$ for any $i\in\{0,1\}$,
and generally ${\rm dir}(\ell^s(\pi^i))=1$ for $s\geq 1$.
Similarly, $r(\pi^i)$, the last successor of $\pi^i$ has direction $down$ for any $i\in\{0,1\}$,
and ${\rm dir}(r^s(\pi^i))=0$ for $s\geq 1$.
\end{proof}

\begin{table}\label{ttable_321}
\begin{center}
\caption{The Gray code list $\C_5(321)$ for the set $\s_5(321)$
given by relation (\ref{gen_C}) and with succession function $\chi$ in Paragraph
\ref{paragraph}. Permutations are listed column-wise in 14 groups;
each group contains the sons of a same permutation
in $\s_4(321)$, see Figure \ref{fig_v} b.
In bold are permutations with direction $down$ and the others
with direction $up$.}

{\small
$\begin{array}{|c|c|c|c|c|}
\hline
\begin{array}[t]{c}
12345 \\
\mathbf{12534}\\
\mathbf{51234} \\
\mathbf{15234} \\
\mathbf{12354} \\ \hline
14253 \\
14523\\
\mathbf{14235} \\ \hline
41253 \\
\end{array}
&
\begin{array}[t]{c}
45123 \\
41523 \\
\mathbf{41235}\\ \hline
12453\\
\mathbf{12435}\\ \hline
31425\\
\mathbf{31452}\\ \hline
34125\\
\mathbf{34512}\\
\end{array}&

\begin{array}[t]{c}
\mathbf{34152} \\ \hline
31254 \\
35124 \\
31524 \\
\mathbf{31245} \\ \hline
13425 \\
\mathbf{13452} \\ \hline
13254 \\
13524 \\
\end{array}&
\begin{array}[t]{c}
\mathbf{13245} \\ \hline
23145 \\
\mathbf{23514} \\
\mathbf{23154} \\ \hline
23451 \\
\mathbf{23415} \\ \hline
21435 \\
\mathbf{21453} \\ \hline
24135 \\
\end{array}&
\begin{array}[t]{c}
\mathbf{24513}  \\
\mathbf{24153}  \\ \hline
21354  \\
25134  \\
21534  \\
\mathbf{21345}  \\

\end{array}
\\ \hline
\end{array}$
}
\end{center}
\end{table}

\begin{lemma}\label{min2}
If $\sigma,\tau\in \s_n(T)$ and $d(\sigma,\tau)\leq p$,
then, for $s\geq 1$,
$$d(r^s(\sigma^0),\ell^s(\tau^1))\leq p.$$
\end{lemma}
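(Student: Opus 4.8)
The strategy is to induct on $s$, with the base case $s=1$ being the crux. For $s=1$, I need to compare $r(\sigma^0)$ with $\ell(\tau^1)$, where $\sigma$ and $\tau$ are $T$-avoiding permutations with $d(\sigma,\tau)\le p$. Here $\ell(\tau^1)$ is the first element of $\phi(\tau^1)$, which by definition is the permutation obtained from $\tau$ by inserting $n+1$ in its $L_k(1)$-th active site, where $k$ is the number of active sites of $\tau$. Since $L_k$ begins with $1$ regardless of the parity of $k$, this means $\ell(\tau^1)$ is $\tau$ with $n+1$ inserted in the first (rightmost) active site, i.e. $\ell(\tau^1)=\tau\,(n+1)$ — the entry $n+1$ appended at the end. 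The key observation about $\phi(\pi^0)$ is that it is the reversal of $\phi(\pi^1)$ with all directions flipped; hence $r(\sigma^0)$, the last element of $\phi(\sigma^0)$, equals the first element of $\phi(\sigma^1)$, which is $\ell(\sigma^1)=\sigma\,(n+1)$. So both $r(\sigma^0)$ and $\ell(\tau^1)$ are obtained by appending the new maximal entry at the far right, and consequently $d(r(\sigma^0),\ell(\tau^1)) = d(\sigma,\tau) \le p$.

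For the inductive step, suppose the claim holds for $s-1$. Write $\sigma' = r(\sigma^0)$ and $\tau' = \ell(\tau^1)$; by the base-case reasoning these are length-$(n+1)$ $T$-avoiding permutations with $d(\sigma',\tau')\le p$. By Lemma~\ref{puis}, ${\rm dir}(\sigma') = {\rm dir}(r(\sigma^0)) = 0$ and ${\rm dir}(\tau') = {\rm dir}(\ell(\tau^1)) = 1$, so $\sigma' = (\sigma')^0$ and $\tau' = (\tau')^1$ in the notation of the lemma. Then
$$r^s(\sigma^0) = r^{s-1}\big((\sigma')^0\big), \qquad \ell^s(\tau^1) = \ell^{s-1}\big((\tau')^1\big),$$
and applying the induction hypothesis to the pair $(\sigma')^0,(\tau')^1$ with the same bound $p$ yields $d(r^{s-1}((\sigma')^0),\ell^{s-1}((\tau')^1))\le p$, which is exactly $d(r^s(\sigma^0),\ell^s(\tau^1))\le p$.

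The only point requiring care — and the one I would flag as the main obstacle — is verifying that $r(\pi^0)$ and $\ell(\pi^1)$ both correspond to inserting $n+1$ in the rightmost active site. This hinges on two facts: (i) the unimodal sequence $L_k$ in (\ref{ordered_grow}) both starts and ends with the element $1$ (the "begins and ends with the same element" property noted after its definition), so $\ell(\pi^1)$ and $r(\pi^1)$ are the same permutation $\pi\,(n+1)$; and (ii) the direction-reversing definition of $\phi(\pi^0)$ as the reversed, direction-flipped copy of $\phi(\pi^1)$ forces $r(\pi^0) = \ell(\pi^1)$ as permutations (they differ only in direction). Once this identification is in place, the distance computation is immediate and the induction runs without any further combinatorial input — in particular Lemma~\ref{puis} supplies the directions needed to re-enter the induction hypothesis cleanly.
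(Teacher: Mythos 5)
Your proof is correct and takes essentially the same route as the paper's, which simply records that $r(\sigma^0)=(\sigma\,(n+1))^0$ and $\ell(\tau^1)=(\tau\,(n+1))^1$ and invokes induction on $s$; your version merely makes the induction explicit via Lemma~\ref{puis}. One side remark in your final paragraph is false, though harmless: $L_k$ begins with $1$ but ends with $2$ (the paper's phrase ``begins and ends with the same element'' means the endpoints are the same for every $k$, not equal to each other), so $r(\pi^1)$ is $\pi$ with $n+1$ inserted in the \emph{second} active site and is not equal to $\ell(\pi^1)=\pi\,(n+1)$; your main derivation never uses this, since the identification $r(\pi^0)=\ell(\pi^1)=\pi\,(n+1)$ already follows from $L_k(1)=1$ together with the reversal definition of $\phi(\pi^0)$.
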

\begin{proof}
$r(\sigma^0)=(\sigma, (n+1))^0$ and $\ell(\tau^1)=(\tau, (n+1))^1$.
Induction on $s$ completes the proof.
\end{proof}

\begin{theorem}
Two consecutive permutations in ${\mathcal C}_{n}$
differ in at most five positions.
\end{theorem}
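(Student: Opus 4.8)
The plan is to argue by strong induction on $n$, following the same template used for Theorem~\ref{23} and Theorem~\ref{th_Sch_Path}. Two consecutive permutations $\mathcal{C}_n(q)$ and $\mathcal{C}_n(q+1)$ in the list $\mathcal{C}_n$ arise in one of two ways, according to the block structure of the concatenation in~(\ref{gen_C}). Either both lie inside the same block $\phi(\mathcal{C}_{n-1}(p))$ for some $p$, or they straddle a block boundary, i.e. $\mathcal{C}_n(q)=r(\mathcal{C}_{n-1}(p))$ and $\mathcal{C}_n(q+1)=\ell(\mathcal{C}_{n-1}(p+1))$ for consecutive directed permutations $\mathcal{C}_{n-1}(p),\mathcal{C}_{n-1}(p+1)$ of $\mathcal{C}_{n-1}$.

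The first case is immediate: Lemma~\ref{direct} says any two successive permutations in a single list $\phi(\pi^i)$ differ in at most three positions, so $d(\mathcal{C}_n(q),\mathcal{C}_n(q+1))\leq 3\leq 5$. For the boundary case I would proceed as follows. By the induction hypothesis, $d(\mathcal{C}_{n-1}(p),\mathcal{C}_{n-1}(p+1))\leq 5$; write $\sigma=\mathcal{C}_{n-1}(p)$ and $\tau=\mathcal{C}_{n-1}(p+1)$. Now I need to understand the directions of $\sigma$ and $\tau$. Since $\mathcal{C}_{n-1}$ is itself built by the same concatenation rule, $\sigma$ and $\tau$ are themselves either two successive elements of some $\phi(\rho^i)$, or a pair $(r(\rho_1^{i_1}),\ell(\rho_2^{i_2}))$ straddling a lower-level boundary. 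Unwinding this recursively, for any pair of consecutive directed permutations in any $\mathcal{C}_m$ there is a smallest level at which they first separate into different blocks; below that level, $\sigma$ is the rightmost descendant and $\tau$ the leftmost descendant of that split. Lemma~\ref{puis} then forces $\mathrm{dir}(\sigma)=0$ and $\mathrm{dir}(\tau)=1$ — that is, at a block boundary of $\mathcal{C}_n$ the left permutation always has direction \emph{down} and the right one direction \emph{up}. (The sole exception, where $\sigma$ and $\tau$ are successive inside one $\phi(\rho^i)$ with no further descent, is covered by the within-block case after applying $\ell$ or $r$ once, which only appends $(n)$ and preserves distance.) With $\mathrm{dir}(\sigma)=0$ and $\mathrm{dir}(\tau)=1$ established, Lemma~\ref{min2} with $s=1$ and $p=5$ gives exactly $d(r(\sigma^0),\ell(\tau^1))\leq 5$, which is $d(\mathcal{C}_n(q),\mathcal{C}_n(q+1))\leq 5$.

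The main obstacle is the bookkeeping in that middle step: rigorously identifying, for a pair of consecutive directed permutations of $\mathcal{C}_n$, the level at which they first land in distinct blocks, and checking that at every boundary of $\mathcal{C}_n$ the flanking directions are genuinely $0$ on the left and $1$ on the right. This is really a structural claim about the list $\mathcal{C}_n$ and should be isolated as a preliminary observation: \emph{if $\mathcal{C}_n(q)$ and $\mathcal{C}_n(q+1)$ lie in different blocks $\phi(\mathcal{C}_{n-1}(p))$ and $\phi(\mathcal{C}_{n-1}(p+1))$, then $\mathcal{C}_n(q)=r^s(\sigma^0)$ and $\mathcal{C}_n(q+1)=\ell^s(\tau^1)$ for some $s\geq 1$ and some consecutive directed permutations $\sigma^0,\tau^1$ of $\mathcal{C}_{n-s}$ with $d(\sigma,\tau)\leq 5$}. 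Granting this, the theorem drops out of Lemmas~\ref{direct}, \ref{puis} and~\ref{min2}; the structural observation itself is proved by peeling off one level of the recursion~(\ref{gen_C}) at a time, using that the first successor in any $\phi(\pi^1)$ is \emph{up} and the last is \emph{down}, and that reversing a list and its directions (the rule producing $\phi(\pi^0)$ from $\phi(\pi^1)$) swaps the roles of $\ell$ and $r$ while toggling directions consistently. Finally I would double-check the base cases $n=1,2$ directly against the lists $\Phi_1,\Phi_2$ analogues for $\mathcal{C}$, i.e. $\mathcal{C}_1=((1)^1)$ and the explicit $\mathcal{C}_2$, so the induction is properly anchored.
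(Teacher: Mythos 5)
Your overall architecture matches the paper's (locate where the two permutations' ancestries merge, then combine Lemmas~\ref{direct}, \ref{puis} and~\ref{min2}), but there is a genuine gap at exactly the step that produces the constant $5$. When $\mathcal{C}_n(q)=r(\sigma^c)$ and $\mathcal{C}_n(q+1)=\ell(\tau^{c'})$ straddle a block boundary, the \emph{parents} $\sigma^c=\mathcal{C}_{n-1}(p)$ and $\tau^{c'}=\mathcal{C}_{n-1}(p+1)$ need not carry directions $(0,1)$: Lemma~\ref{puis} forces that only when they themselves straddle a boundary one level further down. When instead they are \emph{siblings}, i.e.\ successive elements of a single $\phi(\rho^m)$, the direction rule yields the pairs $(1,0)$, $(0,0)$ or $(1,1)$ --- never $(0,1)$ --- so Lemma~\ref{min2} is not applicable, and your parenthetical escape (``applying $\ell$ or $r$ once \dots only appends $(n)$ and preserves distance'') is false: $r(\rho^1)$ and $\ell(\rho^0)$ insert the new largest element into the \emph{second} active site, not the first. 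Concretely, in $\mathcal{C}_4(321)$ the permutations $1423$ and $4123$ are consecutive siblings (both sons of $123$) with directions $(0,0)$ and $d(1423,4123)=2$; the boundary pair they generate in $\mathcal{C}_5$ is $r(1423^0)=14235$ (an append) followed by $\ell(4123^0)=41253$ (a site-$2$ insertion), which are at distance $4$, not $2$ --- see Table~4.

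This sibling case is the heart of the theorem: it is the only place the distance can grow beyond the $3$ of Lemma~\ref{direct}, precisely because the two insertions may land in different (first versus second) active sites and so disturb up to two extra positions, giving $3+2=5$. If your parenthetical were correct, the whole argument would deliver a distance-$3$ Gray code, which the tables refute. The paper's proof isolates this step explicitly: writing $\sigma^i=r^s(\alpha^a)$ and $\tau^j=\ell^s(\beta^b)$ with $\alpha^a,\beta^b$ consecutive in $\phi(\pi^m)$, it does the $s=1$ case by hand (sibling distance $\leq 3$ plus at most $2$ from the possibly mismatched insertion sites), and only for $s>1$ invokes Lemma~\ref{puis} to get directions $(0,1)$ and Lemma~\ref{min2} to propagate the bound $5$ unchanged. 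Your induction-on-$n$ framing can be repaired by splitting the boundary case according to whether the parents are siblings or not, but as written the decisive estimate is missing.
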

\begin{proof}
Let $\sigma^i$ and $\tau^j$ be two consecutive elements of ${\mathcal C}_{n}$.
If there is a $\pi^m\in{\mathcal C}_{n-1}$ such that $\sigma^i,\tau^j\in\phi(\pi^m)$,
then, by Lemma \ref{direct}$, \sigma$ and $\tau$ differ in at most three positions.
Otherwise, let $\pi^m$ be the closest common ancestor of $\sigma^i$ and $\tau^j$ in the generating tree,
that is, $\pi$ is the longest permutation such that there exists a direction
$m\in\{0,1\}$ with $\sigma^i,\tau^j\in \phi(\phi(\ldots\phi(\pi^m)\ldots))$.
In this case, there exist $\alpha^a$ and $\beta^b$ successive elements
in $\phi(\pi^m)$ (so that $\alpha$ and $\beta$ differ in at most three positions)
and an $s\geq 1$ such that $\sigma^i=r^s(\alpha^a)$ and  $\tau^j=\ell^s(\beta^b)$.

If $s=1$, then $\sigma$ and $\tau$ are obtained from $\alpha$ and $\beta$
by the insertion of their largest element in the first or second active site,
according to $a$ and $b$;
in these cases $\sigma$ and $\tau$ differ in at most five positions.
(Actually, if $a=b$, then $\sigma$ and $\tau$ differ as  $\alpha$ and $\beta$,
that is, in at most three positions.)

If $s>1$, by Lemma \ref{puis},
${\rm dir}(r(\alpha^a))=\ldots={\rm dir}(r^s(\alpha^a))=0$ and
${\rm dir}(\ell(\beta^b))=\ldots={\rm dir}(\ell^s(\beta^b))=1$.
Since $r(\alpha^a)$ and $\ell(\beta^b)$ differ in at most five positions,
by Lemma \ref{min2}, so are $\sigma$ and $\tau$.
\end{proof}

The first and last permutations in ${\mathcal C}_{n}$ have distance
two, so ${\mathcal C}_{n}$ is a circular Gray code, see Table 4. The
generating algorithm Gen\_Avoid sketched in the beginning of this
section and presented in Algorithm 2 can be easily modified to
generate the list ${\mathcal C}_n(T)$ for any set of regular
patterns: it is enough to change appropriately the order among its
successive recursive calls by endowing each permutation with a
direction as described above; see also Figure \ref{fig_v}.

\begin{figure}
\begin{tabular}{l}
  \hline
  {\bf Algorithm 2} Pseudocode for generating permutations avoiding a set $T$ of regular\\
  patterns characterized by the succession function $\chi(i,k)$. After the initialization of\\
  $\pi$ by the length 1 permutation $[1]$, the call of Gen\_Avoid($1,2$) produces $\s_n(T)$. Its\\
  ordered version, as described in Section 4, produces distance-5 Gray codes.\\\hline
 {\bf procedure} Gen\_Avoid($size,k$)\\
 $\quad${\bf if} $size=n$ {\bf then}\\
 $\qquad$Print($\pi$)\\
 $\quad${\bf else}\\
 $\qquad size:=size+1$\\
 $\qquad\pi:=[\pi, size]$\\
 $\qquad$Gen\_Avoid($size,\chi(1,k)$)\\
 $\qquad${\bf for} $i:=1$ {\bf to} $k-1$ {\bf do}\\
 $\qquad\quad\pi:= (size-i+1,size-i)\circ \pi$\\
 $\qquad\quad$Gen\_Avoid($size,\chi(i+1,k)$)\\
 $\qquad${\bf end for}\\
 $\qquad${\bf for} $i:=k-1$ {\bf to} $1$ {\bf by} $-1$ {\bf do}\\
 $\qquad\quad\pi:= (size-i+1,size-i)\circ \pi$\\
 $\qquad${\bf end for}\\
 $\quad${\bf end if}\\
 {\bf end procedure}\\ \hline
\end{tabular}
\end{figure}

\subsection{Several well-known classes of regular patterns}\label{paragraph}
Below we give several classes of regular patterns together with the $\chi$ function.
For each class, a recursive construction is given in the corresponding reference(s);
it is based (often implicitly) on the distribution of active sites of the permutations
belonging to the class. It is routine to express these recursive constructions
in terms of $\chi$ functions and check the regularity of each class.

\noindent
Classes given by counting sequences:
\begin{enumerate}
\item[(i)]
$2^{n-1}$ \cite{BarBerPon}.\\
$T=\{321,312\}$,
$\chi_T(i,k)=2$

\item[(ii)]
Pell numbers \cite{BarBerPon}.\\
$T=\{321,3412,4123\}$,
     $\chi_T(i,k)=
     \left\{ \begin{array}{ll}
        3 & {\rm if}\ i=1       \\
        2  & {\rm otherwise}
     \end{array}
     \right.$

\item[(iii)]
even-index Fibonacci numbers \cite{BarBerPon}.
\begin{itemize}
     \item[-] $T=\{321,3412\}$,
     $\chi_T(i,k)=
     \left\{ \begin{array}{ll}
        k+1 & {\rm if}\ i=1       \\
        2  & {\rm otherwise}
     \end{array}
     \right.$
     \item[-] $T=\{321,4123\}$,
     $\chi_T(i,k)=
     \left\{ \begin{array}{ll}
        3 & {\rm if}\ i=1       \\
        i  & {\rm otherwise}
     \end{array}
     \right.$
\end{itemize}

\item[(iv)] \label{for_cat}
     Catalan numbers \cite{Pall_88,Wes_94}.
     \begin{itemize}
     \item[-] $T=\{312\}$,
     $\chi_T(i,k)=i+1$
     \item[-]  $T=\{321\}$,
     $\chi_T(i,k)=
     \left\{ \begin{array}{ll}
        k+1 & {\rm if}\ i=1       \\
        i  & {\rm otherwise}
     \end{array}
     \right.$

     \end{itemize}
\item[(v)] Schr\"oder numbers \cite{Guibert}.
     \begin{itemize}
     \item[-] $T=\{4321,4312\}$,
     $\chi_T(i,k)=
     \left\{ \begin{array}{ll}
       k+1 & {\rm if}\ i=1 {\rm\ or\ }i=2       \\
       i   & {\rm otherwise}
       \end{array}
     \right.$
     \item[-] $T=\{4231,4132\}$,
     $\chi_T(i,k)=
     \left\{ \begin{array}{ll}
       k+1 & {\rm if}\ i=1 {\rm\ or\ }i=k       \\
       i+1   & {\rm otherwise}
       \end{array}
       \right.$
     \item[-] $T=\{4123,4213\}$,
     $\chi_T(i,k)=
     \left\{ \begin{array}{ll}
       k+1 & {\rm if}\ i=k-1 {\rm\ or\ }i=k       \\
       i+2   & {\rm otherwise}
       \end{array}
       \right.$

     \end{itemize}
\item[(vi)] central binomial coefficients $\binom{2n-2}{n-1}$ \cite{Guibert}.
     \begin{itemize}
     \item[-] $T=\{4321,4231,4312,4132\}$,
     $\chi_T(i,k)=
     \left\{ \begin{array}{ll}
       k+1 & {\rm if}\ i=1       \\
       3   & {\rm if}\ i=2       \\
       i   & {\rm otherwise}
       \end{array}
     \right.$

     \item[-] $T=\{4231,4132,4213,4123\}$,
     $\chi_T(i,k)=
     \left\{ \begin{array}{ll}
       3   & {\rm if}\ i=1       \\
       i+1 & {\rm otherwise}
       \end{array}
       \right.$
     \end{itemize}

\end{enumerate}

\noindent
Variable length patterns:

\begin{enumerate}
\item[(a)]
     $T=\{321,(p+1)12\ldots p\}$,
     $\chi_T(i,k)=
     \left\{ \begin{array}{ll}
        k+1 & {\rm if}\ i=1 {\rm \ and\ } k<p        \\
        p & {\rm if}\ i=1 {\rm \ and\ } k=p       \\
        i  & {\rm otherwise}
     \end{array}
     \right.$

\noindent
See for instance \cite{Chow_West,BarBerPon}. If $p=2$, then we retrieve the case (i) above;
$p=3$ corresponds to $T=\{321,4123\}$ in case (iii); and
$p=\infty$ corresponds to $T=\{321\}$ in case (iv).

\item[(b)]
     $T=\{321,3412,(p+1)12\ldots p\}$,
     $\chi_T(i,k)=
     \left\{ \begin{array}{ll}
        k+1 & {\rm if}\ i=1 {\rm \ and\ } k<p        \\
        p & {\rm if}\ i=1 {\rm \ and\ } k=p       \\
        2  & {\rm otherwise}
     \end{array}
     \right.$

\noindent
See for instance \cite{BarBerPon}. If $p=2$, then we retrieve the case (i) above;
if $p=3$, the case (ii);
and $p=\infty$ corresponds to $T=\{321,3412\}$ in case (iii).

\item[(c)]
     $T=\cup_{\tau\in\s_{p-1}}\{(p+1)\tau p\}$.\\
     $\chi_T(i,k)=
     \left\{ \begin{array}{ll}
        k+1 & {\rm if}\  k<p  {\rm \ or\ }  i>k-p+1    \\
        i+p-1  & {\rm otherwise}
     \end{array}
     \right.$

\noindent
See \cite{BdLPP_00,Kremer_2000,Kremer_2003}.
If $p=2$, then we retrieve the case $T=\{312\}$ in point (iv) above;
and $p=3$ corresponds to  $T=\{4123,4213\}$ in point (v).
\end{enumerate}

\section*{Acknowledgments}
The authors kindly thank the anonymous referees for their
helpful suggestions which have greatly improved the accuracy and presentation of this work.
The first two authors would also like to thank Toast, Dublin, for their hospitality during the preparation of this document.

\end{document}